\newtheorem{thm}{Theorem}
\newtheorem{prop}{Proposition}
\newtheorem{lem}[prop]{Lemma}
\DeclarePairedDelimiter{\norm}{\lVert}{\rVert}
\DeclarePairedDelimiter{\abs}{\lvert}{\rvert}
\newcommand{\psld}[2]{\left( #1,#2 \right)_{2}}
\newcommand{\dual}[2]{\left\langle #1,#2 \right\rangle}
\newcommand{\eps}{\varepsilon}
\newcommand{\N}{\mathbb{N}}
\newcommand{\R}{\mathbb{R}}
\newcommand{\hu}{H^1(\R^d)}
\renewcommand{\leq}{\leqslant}
\renewcommand{\geq}{\geqslant}
\DeclareMathAlphabet{\mathpzc}{OT1}{pzc}{m}{it}
\renewcommand{\Re}{\mathcal R\!\mathpzc{e}}
\renewcommand{\Im}{\mathcal I\!\mathpzc{m}}
\begin{document}

\title[Multi-Speeds Solitary Waves for NLS systems]{Multi-Speeds Solitary Waves Solutions For Nonlinear Schr\"odinger Systems}

\author[I.~Ianni]{Isabella Ianni}
\author[S.~Le Coz]{Stefan Le Coz}

\address{
Dipartimento di Matematica - Facolt\`a di Scienze MM.FF.NN.
\newline\indent
Seconda Universit\`a di Napoli
\newline\indent
Viale Lincoln 5, 81100 Caserta
\newline\indent
Italia}
\email{isabella.ianni@unina2.it}

\address{Institut de Math\'ematiques de Toulouse,
\newline\indent
Universit\'e Paul Sabatier
\newline\indent
118 route de Narbonne, 31062 Toulouse Cedex 9
\newline\indent
France}
\email{slecoz@math.univ-toulouse.fr}

\thanks{
Research supported in part by  ANR project ESONSE
}
\subjclass[2010]{35Q55(35C08,35Q51,37K40)}

\date{\today}
\keywords{solitary waves, solitons, nonlinear Schr\"odinger systems}

\begin{abstract}
We prove the existence of a new type of solutions to a nonlinear Schr\"odinger system. These solutions, which we call \emph{multi-speeds solitary waves}, are behaving at large time as a couple of scalar solitary waves traveling at different speeds. 
The proof relies on the construction of approximations of the multi-speeds solitary waves by solving the system  backwards in time and using energy methods to obtain uniform estimates.
\end{abstract}

\maketitle

\section{Introduction}

We consider the following nonlinear Schr\"odinger system:
\begin{equation}\label{eq:nls}
\left
\{\begin{array}{lr}
i\partial_t u_1+\Delta u_1+\mu_1|u_1|^2u_1+\beta|u_2|^2u_1=0,\\
i\partial_t u_2+\Delta u_2+\mu_2|u_2|^2u_2+\beta|u_1|^2u_2=0,
\end{array}
\right.
\end{equation}
where  for $j=1,2$ we have $u_j:\mathbb R\times\mathbb R^d\rightarrow\mathbb C$, $d=1,2,3$, $\mu_j>0$, and $\beta\in\R\setminus\{0\}$.

This type of systems appears in various physical settings, of which we give now three examples. 

When $d=\mu_1=\mu_2=\beta=1$, the system \eqref{eq:nls} is sometimes called \emph{Manakov system}, as it was examined by Manakov \cite{Ma74} as an asymptotic model for the propagation of the electric field in a wageguide. With this specific choice of parameters, the system is completely integrable and can be solved by means of the inverse scattering transform. Such analysis is performed in details in the book \cite{AbPrTr04}, which contains also many examples of physical situations where \eqref{eq:nls} is used.

Later on, \eqref{eq:nls} was derived to model the propagation of light in an optical fiber when taking into account polarization of light and birefringence of the fiber, see e.g.  \cite{Ab07}.  In this case $d=\mu_1=\mu_2=1$ and the parameter $\beta$, which measure the strength of  the XPM (cross phase modulation) interaction, varies depending on the nature of the fiber (e.g. $\beta=2$ for dual-core fibers or $\beta=2/3$ for single-core fibers).

In higher dimension $d=3$, \eqref{eq:nls} can model the interaction of two Bose-Einstein condensates of atoms in different spin states (see e.g \cite{Es97}).  In this case, if $N$ denotes the number of atoms in the $j$-st condensate and $a_{jk}$ is a factor proportional to the scattering length between a $j$-species atom and a $k$-species atom ($a_{jk}$ may be positive or negative, depending if the collision between particles results into an attractive of repulsive interaction), the parameters of \eqref{eq:nls} stands for $\mu_j=(N-1)a_{jj}$ and $\beta=Na_{jk}$. The trapping potential is turned off to model the expansion of the condensates in experiments.

From the mathematical point of view, there has been recently an increasing interest for
\eqref{eq:nls}  and its stationary versions. We give only a few samples of the mathematical studies around \eqref{eq:nls}.  
As mentioned before, the system is completely integrable in the Manakov case, but any modification of the parameters breaks integrability and the analysis of the dynamics of \eqref{eq:nls} in non-integrable cases is largely open. A lot of recent studies (see e.g. \cite{AmCo07,LiWe05,MaMoPe06,Si07,TeVe09,WeWe08}) are concerned with the existence of standing waves solutions  for various ranges of parameters $\mu_1,\mu_2,\beta$. The stability of  such standing waves was also investigated in various cases (see, among many others, \cite{CoCoOh09,MaMoPe10,Oh96}). 

In this work, we want to investigate the existence of solutions to \eqref{eq:nls}  where each component behaves like a soliton, as we explain precisely now. 

When $u_1\equiv0$ or $u_2\equiv0$, the system \eqref{eq:nls} reduces to the scalar  Schr\"odinger equation 
 \begin{equation}\label{eq:nls-scalar-intro}
i\partial_tu+\Delta u+\mu|u|^2u=0.
\end{equation}
It is well known that \eqref{eq:nls-scalar-intro} admits \emph{solitary waves} (see \cite{La80,Sc86}), which are solutions with a fixed profile, possibly rotating and traveling on a line (see Theorem \ref{thm} and Section \ref{sec:nls-scalar} for more details). 
 If $R_1$ denotes a solitary wave solution to \eqref{eq:nls-scalar-intro}, then $(R_1,0)^\intercal$ is trivially a solution to \eqref{eq:nls}. If $R_2$ is another solitary wave solution to \eqref{eq:nls-scalar}, then, due to the nontrivial interaction $\beta\neq 0$, the couple $(R_1,R_2)^\intercal$  has no reason to be a solution to \eqref{eq:nls}. Nevertheless, our goal in this paper is to exhibit solutions of \eqref{eq:nls} behaving in large time  like a couple of solitary waves $(R_1,R_2)^\intercal$, provided the relative speed of the solitary waves is large enough. We call such solutions \emph{multi-speeds solitary waves}. To our knowledge, this is the first time that such solutions are exhibited for non-integrable Schr\"odinger systems. Our main result is the following.

\begin{thm}\label{thm}
For $j=1,2$, let $\omega_j>0,\gamma_j\in\R$, $x_j,v_j\in \R^d$,  and $\Phi_j\in\hu$ solution to
\begin{equation}\label{eq:snls-basic-intro}
-\Delta \Phi_j+\Phi_j-|\Phi_j|^2\Phi_j=0,\qquad\Phi_j\in\hu.
\end{equation}
Define  
\begin{gather}
R_j(t,x):=e^{i(\omega_j t-\frac{|v_j|^2t}{4}+\frac{1}{2}v_j\cdot x+\gamma_j)}\sqrt{\frac{\omega_j}{\mu_j}}\Phi_j\big(\sqrt{\omega_j} (x-v_jt-x_j)\big),\label{eq:soliton-j}\\
v_\star:=|v_1-v_2|,\qquad \omega_\star:=\frac{1}{4}\min\{\omega_1,\omega_2\}.\label{eq:omega-v}
\end{gather}
There exists $v_\sharp>0$ such that if $v_\star>v_\sharp$, then there exists $T_0\in\R$ and a \emph{multi-speeds solitary wave}   $(u_1,u_2)^\intercal$ solution of \eqref{eq:nls} defined on $[T_0,+\infty)$ such that for all $t\in[T_0,+\infty)$ the following holds:
\begin{equation}\label{eq:thm1}
\norm*{\begin{pmatrix}u_1(t)\\u_2(t)\end{pmatrix}-\begin{pmatrix}R_1(t)\\R_2(t)\end{pmatrix}}_{H^1\times H^1}\leq e^{-\sqrt{\omega_\star}v_\star t}.
\end{equation}
\end{thm}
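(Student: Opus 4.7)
The plan is to follow the Martel--Merle backward-in-time strategy. Pick an increasing sequence $T_n\to+\infty$ and let $(u_1^n,u_2^n)^\intercal$ denote the solution of \eqref{eq:nls} with final data $(R_1(T_n),R_2(T_n))^\intercal$ at $t=T_n$, solved backward in time (local well-posedness of \eqref{eq:nls} in $\hu\times\hu$ is standard for this cubic nonlinearity in $d\leq 3$). The core of the argument is to prove, for $v_\star$ large enough, a uniform bound
\[
\norm*{\begin{pmatrix}u_1^n(t)\\u_2^n(t)\end{pmatrix}-\begin{pmatrix}R_1(t)\\R_2(t)\end{pmatrix}}_{H^1\times H^1}\leq e^{-\sqrt{\omega_\star}v_\star t}
\]
on a common interval $[T_0,T_n]$. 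This bound then passes to the limit along a subsequence by weak convergence in $\hu\times\hu$ and strong convergence in $L^2_{\mathrm{loc}}$ at time $T_0$, and continuous dependence of the NLS flow yields the multi-speeds solitary wave on $[T_0,+\infty)$.

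I would prove the uniform bound by a bootstrap: assume the same bound holds on some subinterval $[t^\star,T_n]$ with a larger multiplicative constant, and upgrade it to the stated form. The error $w_j^n:=u_j^n-R_j$ satisfies a perturbed system whose forcing is precisely the cross-interaction $\beta|R_k|^2 R_j$ that $(R_1,R_2)^\intercal$ fails to absorb. The natural quantity to monitor is a sum of Weinstein-type functionals centered on each soliton,
\[
\mathcal{F}(t):=\sum_{j=1,2}\bigl(\mathcal{H}_j(u_j^n(t))-\mathcal{H}_j(R_j(t))\bigr),
\]
where $\mathcal{H}_j$ is the linear combination of mass, energy and momentum of the scalar equation \eqref{eq:nls-scalar-intro} for which $R_j$ is a critical point. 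Differentiating $\mathcal{F}$ in time, the only contributions not already quadratic in $w_j^n$ (hence controllable by the bootstrap through coercivity) involve products of $R_1$ and $R_2$. Using $|\Phi_j(y)|\lesssim e^{-|y|}$ together with the scaling in \eqref{eq:soliton-j}, one has $|R_j(t,x)|\lesssim e^{-\sqrt{\omega_j}|x-v_jt-x_j|}$, so spatial integrals mixing $R_1$ and $R_2$ are of size $e^{-\min\{\sqrt{\omega_1},\sqrt{\omega_2}\}v_\star t}$; after taking a square root to pass from $\mathcal{F}$ to the $H^1$ norm of $w_j^n$, this produces exactly the exponent $\sqrt{\omega_\star}v_\star$ appearing in \eqref{eq:thm1}, which accounts for the factor $1/4$ in the definition of $\omega_\star$.

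The main obstacle I anticipate is the coercivity of $\mathcal{H}_j$ at $R_j$: its Hessian has a kernel generated by the symmetries of \eqref{eq:nls-scalar-intro} and, in dimensions $d=2,3$, also a negative direction. The standard remedy is to introduce modulation parameters $(\omega_j(t),\gamma_j(t),x_j(t),v_j(t))$ and impose on $w_j^n$ orthogonality conditions that kill both the kernel and the negative direction, closing the associated modulation equations in parallel with the bootstrap. Additional care is required for cross terms coupling $w_j^n$ with $|R_k|^2$, but these are again small by the exponential separation of the two solitons. Once coercivity gives $\|w_j^n(t)\|_{H^1}^2\lesssim\mathcal{F}(t)+O(e^{-2\sqrt{\omega_\star}v_\star t})$ and one uses $\mathcal{F}(T_n)=0$ to integrate backward from $T_n$, the bootstrap closes for $v_\star$ sufficiently large and the proof is complete after the compactness step outlined above.
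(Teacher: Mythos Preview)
Your overall architecture---solve backward from $T_n$ with final data $(R_1(T_n),R_2(T_n))^\intercal$, run a bootstrap on a Weinstein-type action functional, then pass to the limit at $T_0$ by compactness---is exactly the paper's. The decay bookkeeping you sketch for the cross terms $|R_1||R_2|$ is also correct in spirit and yields the exponent $\sqrt{\omega_\star}v_\star$ for the reason you give.

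The substantive difference is how the lack of coercivity is handled. You propose modulation: introduce time-dependent parameters $(\omega_j,\gamma_j,x_j,v_j)(t)$ and impose orthogonality to the bad directions. The paper deliberately avoids modulation. Instead it uses a coercivity inequality of the form
\[
c_\star\norm{\varepsilon_j}_{H^1}^2 \leq H_j(t,\varepsilon_j)+\sum_{k=1}^{\nu_j}\psld{\varepsilon_j}{\xi_j^k(t)}^2,
\]
where the $\xi_j^k$ span \emph{all} non-positive eigendirections of the linearized operator, and then controls the added scalar products by a separate $L^2$-control lemma: differentiating $M(\varepsilon_j)$ in time, estimating by $e^{-2\sqrt{\omega_\star}v_\star t}$ via the bootstrap, and integrating from $T_n$ gains the crucial factor $1/(\sqrt{\omega_\star}v_\star)$. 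The same integrate-in-time trick is used on the scalar momentum (combined with conservation of the \emph{total} momentum and \emph{total} energy of the system) to show that the action $\mathcal S$ is an almost-conserved quantity with the same gain. These two gains of $1/v_\star$ are precisely what closes the bootstrap for $v_\star$ large; your sketch mentions the closure but does not isolate this mechanism.

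This is not merely stylistic. The theorem allows the profiles $\Phi_j$ to be \emph{any} solutions of the stationary equation, including excited states. For such profiles the linearized operator can have many negative eigenvalues---more than the $2d+2$ symmetry parameters---so modulation as you describe it (``kill both the kernel and the negative direction'') cannot absorb them all. The paper's approach, borrowed from the excited-state multi-soliton construction of C\^ote--Le Coz, sidesteps this by bounding the projections onto \emph{all} bad directions through the $L^2$ improvement, with no orthogonality imposed. If you restrict to ground-state profiles your modulation route should go through, but for the full statement you need the paper's mechanism (or at least supplement modulation with an analogous $L^2$ argument for the extra negative directions).

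A minor point: for the compactness step the paper upgrades weak $H^1$ convergence at $T_0$ to strong $L^2$ (not just $L^2_{\mathrm{loc}}$) by a cut-off virial computation transferring smallness at large $x$ from $T_\delta$ back to $T_0$; this is needed because continuous dependence in $H^s$, $s<1$, is what propagates the estimate forward.
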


The strategy of the proof of Theorem~\ref{thm} is inspired by the one developed for the study of multi-solitons for scalar nonlinear Schr\"odinger equations in  \cite{CoLe11,CoMaMe11,MaMe06,Me90}. The idea is to solve \eqref{eq:nls} backward in time taking as final data at the final time $T^n$ a couple of solitary waves, where $T^n$ is an increasing sequence of time. 
In this way, we define a sequence of solutions to \eqref{eq:nls} which are approximated multi-speeds solitary waves. Then the proof relies on two main steps. First we show that the approximate solutions satisfy the required estimate \eqref{eq:thm1} on a sequence of time-intervals $[T_0,T^n]$, \emph{with $T_0$ independent of $n$} (see Proposition \ref{prop:uniform}). Then we prove that the sequence of initial data obtained at $T_0$  is compact (see Proposition \ref{prop:compactness}). Therefore, we can extract an initial data giving rise to a solution of \eqref{eq:nls} which satisfies the conclusion of Theorem \ref{thm}. 

Our approach is very flexible and can probably be extended to many other situations. We do not need many of the technical features present in \cite{CoLe11,CoMaMe11,MaMe06,Me90} like modulation theory or localization procedures. Neither do we require any assumptions on the attractiveness ($\beta<0$) or repulsiveness ($\beta>0$) of the coupling, or on the strength of the nonlinearities  $\mu_1,\mu_2$. Whereas it is common when working with solitary waves to consider only ground states profiles $\Phi_j$ of \eqref{eq:snls-basic-intro}, in our case, as in \cite{CoLe11}, the profiles  can be ground states or excited states. Our only limitation is the assumption on large relative speed $v_\star$, which is due to technical restrictions when proving the uniform estimates (see Section \ref{sec:uniform}).

The rest of the paper is divided as follows. In Section \ref{sec:nls-scalar}, we gather some useful facts about scalar nonlinear Schr\"odinger equations and their solitary waves. Then in Section \ref{sec:construction} we prove the existence of multi-speeds solitary waves assuming uniform estimates and a compactness result. The proof of the uniform estimates and the compactness result are given in Sections \ref{sec:uniform} and \ref{sec:compactness}.

\subsection*{Notations}

Before going further, we precise some notations.
The norms of $L^{p}(\R^d)$ spaces will be denoted by $\norm{\cdot}_{L^p}$ and the norm of $\hu$ by $\norm{\cdot}_{H^1}$.
The spaces $L^2(\R^d)\times L^2(\R^d)$ and $\hu\times\hu$ are endowed with the norms 
\[
\norm*{\begin{pmatrix}u_1\\u_2\end{pmatrix}}_{L^2\times L^2}=\sqrt{\norm{u_1}^2_{L^2}+\norm{u_2}^2_{L^2}},
\qquad
\norm*{\begin{pmatrix}u_1\\u_2\end{pmatrix}}_{H^1\times H^1}=\sqrt{\norm{u_1}^2_{H^1}+\norm{u_2}^2_{H^1}}.
\]
When writing vectors inside the text, we will use the superscript $\intercal$ to denote the transpose of a vector, that is: 
$
(u_1,u_2)^\intercal=\begin{pmatrix}u_1\\u_2\end{pmatrix}.$
The derivative with respect to the time $t$ will be denoted either by $\frac{\partial}{\partial t}$ or simply~$\partial_t$.
Throughout the paper the letter $C$ will denote various positive constants whose exact values may change from line to line but are of no importance for the analysis.

\section{Scalar solitary waves}\label{sec:nls-scalar}

In this section we summarize the results on scalar solitary waves that we will need for the proof of Theorem \ref{thm}. For more on scalar Schr\"odinger equations, the reader can refer to  \cite{Ca03,SuSu99,Ta06} and the references cited therein. Consider the scalar Schr\"odinger equation 
\begin{equation}\label{eq:nls-scalar}
i\partial_tu+\Delta u+\mu_0|u|^2u=0,
\end{equation}
where $\mu_0$ is a positive constant. 

The energy, mass and momentum, defined as follows, are conserved along the flow of \eqref{eq:nls-scalar}. 
\begin{align*}
E(u,\mu_0)&:=\frac{1}{2}\norm{\nabla u}_{L^2}^2-\frac{\mu_0}{4}\norm{u}_{L^4}^4,\quad
M(u):=\frac12\norm{u}^2_{L^2},\quad
P(u):=\frac12\Im\int_{\mathbb R^d} u\nabla \bar udx.
\end{align*}

A basic solitary wave $u$ is a solution of \eqref{eq:nls-scalar} of the form $u(t,x)=\frac{e^{it}}{\sqrt{\mu_0}}\Phi(x)$, where $\Phi$ is a solution of 
\begin{equation}\label{eq:snls-basic}
-\Delta \Phi+\Phi-|\Phi|^2\Phi=0,\qquad\Phi\in\hu.
\end{equation}
The existence and properties of solutions to equations of the type \eqref{eq:snls-basic} are well-known (see e.g. the fundamental work of Berestycki and Lions \cite{BeLi83-1,BeLi83-2}). All solutions to \eqref{eq:snls-basic} are smooth and exponentially decreasing. Precisely, for all $\eta<1$, for all solutions $\Phi$ to \eqref{eq:snls-basic}, and for all $x\in\R^d$, there exists $C_\Phi>0$ such that the following estimate holds:
\[
|\Phi(x)|+|\nabla\Phi(x)|\leq C_\Phi e^{-\eta |x|}.
\]
Equation \eqref{eq:snls-basic} admits a unique \emph{ground state}, i.e. a positive and radial solution which minimizes among all solutions the \emph{action} $S:=E(\cdot,1)+M$. In dimension~$d\geq 2$, there exist also infinitely many other solutions called \emph{excited states}. Apart when $d=1$, the classification of solutions to \eqref{eq:snls-basic} is still an active research area. Classification of radial solutions was completed recently in the works \cite{CoGaYa09,CoGaYa11}. Among non radial solutions we mention the vortices, which were first constructed by Lions \cite{Li86}. In dimension~$2$, a vortex is a solution of \eqref{eq:snls-basic} of the form $\Phi(\rho,\theta)=e^{im\theta}\Psi(\rho)$ where $(\rho,\theta)$ are polar coordinates and $m\in\R$. In this work, we treat any type of solutions to \eqref{eq:snls-basic}.

Invariances by scaling, translation, phase shift and galilean transform generate a $(2d+2)$-parameters family of solitary waves solutions to \eqref{eq:nls-scalar}. Precisely, let $\omega_0>0,\gamma_0\in\R$, $x_0,v_0\in \R^d$, and take $\Phi_0\in\hu$ a solution to \eqref{eq:snls-basic}. Then $R_0$ defined by
\begin{equation}\label{eq:soliton}
R_0(t,x):=e^{i(\omega_0 t-\frac{|v_0|^2}{4}+\frac{1}{2}v_0\cdot x+\gamma_0)}\sqrt{\frac{\omega_0}{\mu_0}}\Phi_0\big(\sqrt{\omega_0} (x-v_0t-x_0)\big)
\end{equation}
is a solution to \eqref{eq:nls-scalar}. Note that, for $t$ fixed, $R_0(t,\cdot)$ is a critical point of the functional $S_0$ defined by
\begin{equation}\label{eq:def-S}
S_0:=E(\cdot,\mu_0)+\left(\omega_0+\frac{|v_0|^2}{4}\right)M+v_0\cdot P.
\end{equation}
Coercivity properties of linearizations of $S_0$-like functionals will play an important role in our analysis. We define the linearized action $H_0$ for $t\in\R$ and $\eps\in\hu$ by
\begin{equation}\label{eq:def-H}
H_0(t,\eps):=\dual{S_0''(R_0(t))\eps}{\eps}.
\end{equation}

\begin{lem}[Scalar Coercivity]\label{lem:coercivity}
Take $\omega_0>0,\gamma_0\in\R$, $x_0,v_0\in \R^d$, $\Phi_0\in\hu$ a solution to \eqref{eq:snls-basic} and let $R_0$ be the solitary wave solution of \eqref{eq:nls-scalar} given by \eqref{eq:soliton}, $S_0$ and $H_0$ the functionals given by \eqref{eq:def-S}-\eqref{eq:def-H}. Then there exists $c_0>0$, $\nu_0\in\N$, and a family of normalized functions $\{\xi_0^k\in L^2(\R^d); \norm{\xi_0^k}_{L^2}=1, k=1,...,\nu_0\}$ such that for all $t\in\R$ and for all $\eps\in\hu$ we have
\[
c_0\norm{\eps}_{H^1}^2\leq H_0(t,\eps)+\sum_{k=1}^{\nu_0}\psld{\eps}{\xi_0^k(t)}^2,
\]
where by $\xi_0^k(t)$ we denote the functions defined by
\[
\xi_0^k(t)(x):=e^{i(\omega_0 t-\frac{|v_0|^2}{4}+\frac{1}{2}v_0\cdot x+\gamma_0)}\sqrt{\frac{\omega_0}{\mu_0}}\xi_0^k\big(\sqrt{\omega_0} (x-v_0t-x_0)\big).
\]
\end{lem}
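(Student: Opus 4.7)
The natural strategy is to reduce the problem, by the invariances of \eqref{eq:nls-scalar}, to a time-independent coercivity statement for the second variation of the action around the stationary profile $\Phi_0$, then invoke the standard spectral theory of Schr\"odinger operators with rapidly decaying potentials.

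First I would exploit the symmetries used to build $R_0$. Given $\eps\in\hu$, I would introduce a change of unknown
\[
\tilde\eps(y):=\sqrt{\tfrac{\mu_0}{\omega_0}}\,e^{-i(\omega_0 t-\frac{|v_0|^2}{4}+\frac12 v_0\cdot x+\gamma_0)}\eps(x),\qquad y=\sqrt{\omega_0}(x-v_0 t-x_0),
\]
which is an isomorphism of $\hu$ onto itself. A direct computation (using that $S_0$ has been designed so that $R_0(t,\cdot)$ is a critical point and that the modulation factors match exactly the Galilean/scaling/phase symmetries) shows that
\[
H_0(t,\eps)=\kappa\,\tilde H(\tilde\eps),\qquad \tilde H(\tilde\eps):=\dual{\tilde S''(\Phi_0)\tilde\eps}{\tilde\eps},
\]
where $\kappa>0$ is a $t$- and $\eps$-independent constant coming from the Jacobian and $\tilde S:=E(\cdot,1)+M$ is the action for the normalized stationary equation \eqref{eq:snls-basic}. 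In the same way, the $\hu$ norms are comparable up to a $t$-independent multiplicative constant. So it suffices to prove the coercivity inequality for $\tilde H$ at the stationary profile $\Phi_0$.

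Next I would analyze $\tilde H$. Splitting $\tilde\eps=a+ib$ with $a,b$ real-valued, one obtains
\[
\tilde H(\tilde\eps)=\dual{L_+ a}{a}+\dual{L_- b}{b},
\]
where $L_+:=-\Delta+1-3\Phi_0^2$ and $L_-:=-\Delta+1-\Phi_0^2$ are self-adjoint Schr\"odinger operators on $L^2(\R^d)$. Since $\Phi_0$ is smooth and exponentially decaying (as recalled in Section~\ref{sec:nls-scalar}), the potentials are relatively compact perturbations of $-\Delta+1$, so the essential spectrum of $L_\pm$ is $[1,+\infty)$ and each operator has only finitely many eigenvalues in $(-\infty,0]$, each of finite multiplicity. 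Let $\{\eta^k\}_{k=1}^{\nu_0}$ be an $L^2$-orthonormal basis of the direct sum of the non-positive eigenspaces of $L_+$ and $L_-$ (embedded into the complex space in the natural way). By the spectral theorem there exists $c>0$ such that, for every $\tilde\eps\in\hu$ which is $L^2$-orthogonal to all $\eta^k$,
\[
\tilde H(\tilde\eps)\geq c\,\bigl(\norm{\nabla\tilde\eps}_{L^2}^2+\norm{\tilde\eps}_{L^2}^2\bigr).
\]
A standard decomposition argument (write $\tilde\eps=\sum_k(\tilde\eps,\eta^k)_2\eta^k+\tilde\eps^\perp$, apply the bound on $\tilde\eps^\perp$, and control the finite-dimensional part by $\sum_k(\tilde\eps,\eta^k)_2^2$) then yields
\[
c\,\norm{\tilde\eps}_{H^1}^2\leq \tilde H(\tilde\eps)+C\sum_{k=1}^{\nu_0}(\tilde\eps,\eta^k)_2^2,
\]
possibly after enlarging $\nu_0$ and $C$. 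Transporting this back via the inverse change of variables and absorbing all constants into $c_0$ and into the renormalization of the $\xi_0^k$'s (defined from the $\eta^k$'s by precisely the formula stated in the lemma) gives the claimed inequality.

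The main technical point to check is the validity of the spectral picture for $L_\pm$ when $\Phi_0$ is an arbitrary (possibly excited or vortex) solution of \eqref{eq:snls-basic}, and not only a ground state. The relative compactness of the potentials $\Phi_0^2$ and $3\Phi_0^2$ with respect to $-\Delta+1$ is still ensured by the uniform exponential decay recalled in Section~\ref{sec:nls-scalar}, so Weyl's theorem still applies and the discrete nonpositive spectrum is finite; the rest of the argument is independent of the precise nature of $\Phi_0$. The second delicate point is the bookkeeping of the isomorphism $\eps\mapsto\tilde\eps$: one must verify that the modulation by $e^{i\theta(t,x)}$, which depends on $x$, is compatible with the $S_0$-specific shifts $\omega_0+|v_0|^2/4$ and $v_0\cdot P$ in \eqref{eq:def-S}, which is the reason these shifts were built in to $S_0$ in the first place.
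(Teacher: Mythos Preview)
Your proposal is correct and follows essentially the same approach as the paper's own (sketched) proof: reduce via the Galilean/scaling/phase symmetries to the stationary profile, decompose the second variation into the operators $L_+$ and $L_-$, use that these are relatively compact perturbations of $-\Delta+1$ to obtain finitely many nonpositive directions, and then transport the resulting coercivity back. The paper handles the complex-valued $\Phi_0$ case in the same way you do, by first treating real $\Phi$ and then noting that the argument extends, referring to \cite{CoLe11,MaMe06} for details.
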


\begin{proof}[Sketch of proof]
The result being classical we only recall the main arguments. Consider $\Phi$ a real solution of \eqref{eq:snls-basic}. Then $\Phi$ is a critical point of the functional $S=E(\cdot,1)+M$. 
For $\eps\in\hu$, the functional $\dual{S''(\Phi)\eps}{\eps}$ can be  decomposed by writing
\begin{equation*}
\dual{S''(\Phi)\eps}{\eps}=\dual{L_+\Re(\eps)}{\Re(\eps)}+\dual{L_-\Im(\eps)}{\Im(\eps)},
\end{equation*}
where  $L_+,L_-$ are two self-adjoint linear operators defined by:
\begin{align*}
L_+& =-\Delta+1-3|\Phi|^2,\\
L_-& =-\Delta+1-|\Phi|^2.
\end{align*}
The operators $L_+$ and $L_-$ are self adjoint compact perturbations of $-\Delta+1$, hence their spectrums lie on the real line and consist of essential spectrum on $[1,+\infty)$  and a finite number of eigenvalues on $(-\infty,\eta]$ for any $\eta<1$. Hence, there exists $c_0>0$, $\nu_0\in\N$ corresponding to the number of non-positive eigenvalues of $L_+$ and $L_-$ (counted with multiplicity) and a family of normalized eigenfunctions $\{\xi_0^k\in L^2(\R^d); \norm{\xi_0^k}_{L^2}=1, k=1,...,\nu_0\}$ such that   
\[
c_0\norm{\eps}_{H^1}^2\leq \dual{S''(\Phi)\eps}{\eps}+\sum_{k=1}^{\nu_0}\psld{\eps}{\xi_0^k}^2.
\]
The conclusion of the Lemma~follows by extending the arguments to complex-valued $\Phi$ and applying scaling, phase shift, translations and galilean transform (see \cite{CoLe11,MaMe06} for details). 
\end{proof}

\section{Construction of the solution}\label{sec:construction}
 
Starting from now and for the rest of the paper we fix for $j=1,2$ a set of parameters $\omega_j>0,\gamma_j\in\R$, $x_j,v_j\in \R^d$,  and $\Phi_j\in\hu$ solution to \eqref{eq:snls-basic-intro}. Let  $R_j$ denote the corresponding solitary wave defined in \eqref{eq:soliton-j}, $v_\star$ the relative speed and $\omega_\star$ the minimal frequency, both defined in \eqref{eq:omega-v}.

Before starting the proof, we need some preliminaries on the local well-posedness of \eqref{eq:nls}. In our setting, local well-posedness follows from classical arguments of the local Cauchy theory for Schr\"odinger equations (see e.g. \cite[Remark 3.3.12]{Ca03} and \cite{CaWe90}). Precisely, for any $0<\sigma\leq 1$ such that $2<\frac{4}{d-2\sigma}$ or $\sigma=1$ and for any initial data $(u_1^0,u_2^0)^\intercal\in H^\sigma(\R^d)\times H^\sigma(\R^d)$ there exist $T_\star,T^\star>0$ and a solution  to \eqref{eq:nls} $(u_1,u_2)^\intercal\in\mathcal C\big((-T_\star,T^\star), H^\sigma(\R^d)\times H^\sigma(\R^d)\big)$ such that  
$(u_1(0),u_2(0))^\intercal=(u_1^0,u_2^0)^\intercal$. If in addition $(u_1^0,u_2^0)^\intercal\in\hu\times\hu$, then the solution also belongs to 
$\mathcal C^1\big((-T_\star,T^\star), H^{-1}(\R^d)\times H^{-1}(\R^d)\big)$ and the \emph{blow-up alternative} holds, that is if $T^\star<+\infty$ (resp. $T_\star<+\infty$) then 
\[
\lim_{t\to T^\star} \norm*{\begin{pmatrix}u_1(t)\\u_2(t)\end{pmatrix}}_{H^1\times H^1}=+\infty, \quad\left(\text{resp. }\lim_{t\to -T_\star} \norm*{\begin{pmatrix}u_1(t)\\u_2(t)\end{pmatrix}}_{H^1\times H^1}=+\infty\right).
\]
In the sequel, we shall mainly work with the scalar energy $E(\cdot,\mu)$, momentum $P$ and masse $M$ defined in Section \ref{sec:nls-scalar}, but we remark here that the system \eqref{eq:nls} admits its own conservation laws. Precisely, the total energy $\mathcal E$, the total momentum $\mathcal P$  (defined as follows)  and the masses $M$ of each component are conserved quantities for the $\hu$-flow  of \eqref{eq:nls}:
\begin{gather}
\mathcal{E}
\begin{pmatrix}
u_1(t)\\u_2(t)\end{pmatrix}
:=E(u_1(t),\mu_1)+E(u_2(t),\mu_2)-\frac{\beta}{2}\int_{\mathbb R^d}|u_1(t)|^2|u_2(t)|^2=
\mathcal E \begin{pmatrix}
u_1^0\\u_2^0\end{pmatrix},\label{eq:conservE}
\\
\mathcal P\begin{pmatrix}
u_1(t)\\u_2(t)\end{pmatrix}:=P(u_1(t))+P(u_2(t))=\mathcal P \begin{pmatrix}
u_1^0\\u_2^0\end{pmatrix},\label{eq:conservP}
\\
M(u_1(t))=M(u_1^0), \qquad M(u_2(t))=M(u_2^0).\label{eq:conservM}
\end{gather}

We can now define a sequence of approximated multi-speeds solitary waves. Let $T^n\in\R$ be an increasing sequence of times such that $\lim_{n\to+\infty}T^n=+\infty$. For each $n\in\N$, let $(u^n_1,u^n_2)^\intercal$ be the solution of \eqref{eq:nls} defined on the interval $(T_n,T^n]$ and such that the \emph{final data} satisfy $(u^n_1(T^n),u^n_2(T^n))^\intercal=(R_1(T^n),R_2(T^n))^\intercal$. We will prove that there exists some $T_0$ independent of $n$ such that for every $n$ large enough $(u^n_1,u^n_2)^\intercal$ is defined on $[T_0,T^n]$ and is close to $(R_1,R_2)^\intercal$. More precisely, we have the following proposition, which will be proved in Section \ref{sec:uniform}.
\begin{prop}[Uniform estimates]\label{prop:uniform}
There exists $v_\sharp$ such that if $v_\star>v_\sharp$, then the following holds. 
There exists $T_0\in\R$, and $n_0\in\N$ such that for all $n\geq n_0$ and for all $t\in[T_0,T^n]$ the following estimate is satisfied:
\begin{equation}\label{eq:estimate}
\norm*{\begin{pmatrix}u^n_1(t)\\u^n_2(t)\end{pmatrix}-\begin{pmatrix}R_1(t)\\R_2(t)\end{pmatrix}}_{H^1\times H^1} \leq 
 e^{-\sqrt{\omega_\star}v_\star t}.
\end{equation}
\end{prop}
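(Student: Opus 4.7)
The plan is to define the error $\varepsilon_j^n(t) := u_j^n(t) - R_j(t)$ (so that $\varepsilon^n(T^n) = 0$ by the choice of final data) and run a bootstrap backwards from $T^n$. Setting
\[
t^* := \inf\left\{\tau \in [T_0, T^n] : \|\varepsilon^n(s)\|_{H^1 \times H^1} \leq e^{-\sqrt{\omega_\star} v_\star s} \text{ for all } s \in [\tau, T^n]\right\},
\]
I use continuity of the $H^1 \times H^1$-flow (from the local well-posedness recalled just above) to ensure $t^* < T^n$ for $n$ large. The aim is then to prove on $[t^*, T^n]$ the strictly improved estimate $\|\varepsilon^n(t)\|_{H^1 \times H^1} \leq \tfrac{1}{2} e^{-\sqrt{\omega_\star} v_\star t}$, which by continuity rules out $t^* > T_0$.

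\textbf{Lyapunov functional and Taylor expansion.} Introduce the sum of scalar actions
\[
\mathcal{F}(t) := S_1(u_1^n(t)) + S_2(u_2^n(t)), \qquad S_j := E(\cdot, \mu_j) + \left(\omega_j + \tfrac{|v_j|^2}{4}\right) M + v_j \cdot P,
\]
as in \eqref{eq:def-S}. Since $R_j(t)$ is a critical point of $S_j$ and $S_j(R_j(t))$ is independent of $t$, Taylor expansion around $R_j(t)$ together with $u_j^n(T^n) = R_j(T^n)$ gives
\[
\mathcal{F}(t) - \mathcal{F}(T^n) = \tfrac{1}{2} \sum_{j=1,2} H_j(t, \varepsilon_j^n(t)) + O\!\left(\|\varepsilon^n(t)\|_{H^1 \times H^1}^3\right),
\]
with $H_j$ the quadratic form from \eqref{eq:def-H}.

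\textbf{Estimating the left-hand side via interaction.} Because $u_j^n$ solves the full coupled system \eqref{eq:nls} rather than the scalar NLS, a direct computation shows that $\frac{d}{dt}\mathcal{F}$ is a sum of integrals proportional to $\beta$, each involving cross products such as $|u_1^n|^2|u_2^n|^2$ or $\nabla|u_j^n|^2 \cdot |u_k^n|^2$. Writing $u_j^n = R_j + \varepsilon_j^n$ and using the exponential decay of $\Phi_j$ from Section~\ref{sec:nls-scalar} together with the pointwise inequality
\[
\sqrt{\omega_1}\,|x - v_1 t - x_1| + \sqrt{\omega_2}\,|x - v_2 t - x_2| \geq 2\sqrt{\omega_\star}\, v_\star t - C,
\]
valid for $t$ large and reflecting the linear separation of the two soliton centers, the dominant interaction $\int |R_1|^2 |R_2|^2\,dx$ is controlled by $C e^{-c \sqrt{\omega_\star} v_\star t}$ with $c$ comfortably larger than $2$; the slack comes precisely from the factor $1/4$ built into the definition of $\omega_\star$. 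Integrating backwards from $T^n$ yields $|\mathcal{F}(t) - \mathcal{F}(T^n)| \leq C e^{-c \sqrt{\omega_\star} v_\star t}$ on $[t^*, T^n]$.

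\textbf{Coercivity, bad directions, and closure.} Applying Lemma~\ref{lem:coercivity} componentwise produces directions $\xi_j^k(t)$ and a constant $c_0 > 0$ with
\[
c_0 \|\varepsilon^n(t)\|_{H^1 \times H^1}^2 \leq \sum_j H_j(t, \varepsilon_j^n(t)) + \sum_{j,k} \psld{\varepsilon_j^n(t)}{\xi_j^k(t)}^2.
\]
The main obstacle is to bound the bad-direction inner products \emph{without} modulation theory. Conservation of individual mass $M(u_j^n(t)) = M(R_j(t))$ handles the phase-mode direction $\xi \propto R_j$ quadratically via $\psld{\varepsilon_j^n(t)}{R_j(t)} = -\tfrac{1}{2}\|\varepsilon_j^n(t)\|_{L^2}^2$, which is negligible next to $\|\varepsilon_j^n\|_{H^1}^2$. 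For the remaining modes (translation, Galilean, and any extra eigendirections arising when $\Phi_j$ is an excited state), the idea is to use their vanishing at $T^n$ together with estimates on $\tfrac{d}{dt}\psld{\varepsilon_j^n}{\xi_j^k}$ whose source terms are again interaction integrals of the same exponential type as above (using conservation of $\mathcal{P}$ to absorb the zero-mode contributions). Assembling all ingredients yields
\[
\|\varepsilon^n(t)\|_{H^1 \times H^1}^2 \leq C_1 e^{-c \sqrt{\omega_\star} v_\star t} + C_2 \|\varepsilon^n(t)\|_{H^1 \times H^1}^3,
\]
and for $v_\star$ large the bootstrap closes with room to spare: the cubic term is absorbed by the bootstrap assumption, and the exponent $c > 2$ produces the improved constant $\tfrac{1}{2}$ in \eqref{eq:estimate}, so that $t^* = T_0$ for a suitable $T_0$ independent of $n$.
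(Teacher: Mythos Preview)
Your overall structure---backwards bootstrap, action functional $\mathcal F=S_1(u_1)+S_2(u_2)$, Taylor expansion, coercivity---matches the paper's, but there is a genuine gap in how you control $\mathcal F(t)-\mathcal F(T^n)$. You propose to compute $\frac{d}{dt}\mathcal F$ directly and integrate. The paper points out explicitly that this fails in dimension $d\geq 2$: differentiating the scalar energies $E(u_j,\mu_j)$ (equivalently, differentiating the coupling $\int|u_1|^2|u_2|^2$) produces, after writing $u_j=R_j+\varepsilon_j$, a term of the type
\[
\int_{\R^d}\Im(\varepsilon_1\nabla\bar\varepsilon_1)\cdot\Re(\bar\varepsilon_2\nabla\varepsilon_2)\,dx,
\]
which carries two gradients of $\varepsilon$ and therefore cannot be bounded using only an $H^1$-bootstrap hypothesis (any H\"older splitting forces $\nabla\varepsilon_j\in L^p$ for some $p>2$). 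The paper circumvents this by \emph{not} differentiating the energy part: it uses conservation of the total energy $\mathcal E$ (see \eqref{eq:energy-trick}) to reduce the energy variation to the coupling $\int|u_1|^2|u_2|^2$, which is then estimated \emph{pointwise in time} via the decomposition $u_j=R_j+\varepsilon_j$, Lemma~\ref{lem:exponential-decay}, and the improved $L^2$-control of Lemma~\ref{lem:L2-control}.

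There is a second issue in your closing mechanism. You assert that the bootstrap closes because the interaction decays like $e^{-c\sqrt{\omega_\star}v_\star t}$ with $c>2$. This is correct for the pure $R_1$--$R_2$ products, but mixed terms such as $\int|R_1|^2|\varepsilon_2|^2$ only give $C\norm{\varepsilon_2}_{L^2}^2\leq Ce^{-2\sqrt{\omega_\star}v_\star t}$ under the bootstrap---no extra decay in the exponent. In the paper the improvement is a \emph{prefactor} $C/(\sqrt{\omega_\star}v_\star)$, obtained either by integrating in time (for the momentum part, after using total-momentum conservation to replace $v_j$ by $v_1-v_2$) or via the separate $L^2$-control of Lemma~\ref{lem:L2-control}. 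That same $L^2$-lemma---proved by differentiating $M(\varepsilon_j)$ and integrating---is also how the paper handles \emph{all} the bad-direction scalar products $\psld{\varepsilon_j}{\xi_j^k}$ at once, which is both simpler and more robust than your plan of estimating $\frac{d}{dt}\psld{\varepsilon_j}{\xi_j^k}$ mode by mode, a step you leave essentially unspecified.
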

As $T^n$ goes to $+\infty$, the sequence  $(u^n_1,u^n_2)^\intercal$ provides a better and better approximation of a multi-speeds solitary wave. What remains to show is the convergence of this sequence. Due to local well-posedness and uniform estimates, the main issue is to obtain the convergence of the sequence of  initial data $(u^n_1(T_0),u^n_2(T_0))^\intercal$. This is the object of the following proposition, which will be proved in Section \ref{sec:compactness}.
\begin{prop}[Compactness]\label{prop:compactness}
There exists $(u^0_1,u^0_2)^\intercal\in \hu\times\hu$ such that, possibly for a subsequence only, $(u^n_1(T_0),u^n_2(T_0))^\intercal\to(u^0_1,u^0_2)^\intercal$ strongly in $H^s(\R^d)\times H^s(\R^d)$ for any $s\in[0,1)$ when $n\to+\infty$.
\end{prop}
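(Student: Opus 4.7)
The plan is to extract a weak subsequential limit of $(u_1^n(T_0),u_2^n(T_0))^\intercal$ in $H^1\times H^1$, identify it with the initial data at $T_0$ of a genuine $H^1$ solution of \eqref{eq:nls} that inherits the estimate \eqref{eq:thm1}, and then upgrade weak convergence to strong $L^2$ convergence by a mass-matching argument. Strong $H^s$ convergence for $s<1$ will then follow by interpolation with the uniform $H^1$ bound. The first step is immediate: Proposition \ref{prop:uniform} gives an $n$-uniform $H^1\times H^1$ bound on $(u_1^n(T_0),u_2^n(T_0))^\intercal$, so along a subsequence it converges weakly in $H^1\times H^1$ to some $(u_1^0,u_2^0)^\intercal$.

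Second, I would construct a solution $u=(u_1,u_2)^\intercal$ of \eqref{eq:nls} on $[T_0,+\infty)$ with $u(T_0)=(u_1^0,u_2^0)^\intercal$. On every interval $[T_0,T]\subset[T_0,T^n]$ the sequence is uniformly bounded in $L^\infty([T_0,T];H^1\times H^1)$, and using \eqref{eq:nls} together with the Sobolev embedding $H^1\hookrightarrow L^4$ (valid for $d\leq 3$), $\partial_tu_j^n$ is uniformly bounded in $L^\infty([T_0,T];H^{-1})$. Aubin--Lions--Simon applied on every ball, combined with a diagonal extraction in $T$, produces a further subsequence converging in $C_{loc}([T_0,+\infty);L^2_{loc}(\R^d)\times L^2_{loc}(\R^d))$ to some $(u_1,u_2)^\intercal$. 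Interpolating strong $L^2_{loc}$ convergence with the uniform $H^1$ bound yields strong $L^4_{loc}$ convergence, which is enough to pass to the limit in the cubic nonlinearities of \eqref{eq:nls}. Weak lower semi-continuity of the $H^1\times H^1$ norm at each fixed $t$ then transfers \eqref{eq:estimate} to $(u_1,u_2)^\intercal$:
\[
\norm*{\begin{pmatrix}u_1(t)\\u_2(t)\end{pmatrix}-\begin{pmatrix}R_1(t)\\R_2(t)\end{pmatrix}}_{H^1\times H^1}\leq e^{-\sqrt{\omega_\star}v_\star t},\qquad t\in[T_0,+\infty).
\]

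The third and decisive step is to match $L^2$ masses. The soliton mass $M(R_j(t))$ is $t$-invariant by direct inspection of \eqref{eq:soliton-j}, so conservation of $M(u_j^n)$ along the flow of \eqref{eq:nls} yields
\[
M(u_j^n(T_0))=M(u_j^n(T^n))=M(R_j(T^n))=M(R_j(T_0))
\]
for every $n$, while the asymptotic bound just established, together with mass conservation for the limit $u$, produces
\[
M(u_j^0)=M(u_j(T_0))=\lim_{t\to+\infty}M(u_j(t))=M(R_j(T_0)).
\]
Hence $\|u_j^n(T_0)\|_{L^2}=\|u_j^0\|_{L^2}$ for every $n$, and combined with the weak $L^2$ convergence $u_j^n(T_0)\rightharpoonup u_j^0$ this forces strong $L^2$ convergence by the Radon--Riesz property of Hilbert spaces. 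The interpolation estimate
\[
\|u_j^n(T_0)-u_j^0\|_{H^s}\leq C\,\|u_j^n(T_0)-u_j^0\|_{L^2}^{1-s}\|u_j^n(T_0)-u_j^0\|_{H^1}^{s}\longrightarrow 0
\]
then gives strong convergence in $H^s(\R^d)\times H^s(\R^d)$ for every $s\in[0,1)$.

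I expect the main obstacle to be the second step, namely realizing the weak limit as a bona fide $H^1$ solution of \eqref{eq:nls} that inherits the exponential asymptotic bound. Without this asymptotic soliton behavior of $u$ as $t\to+\infty$ there is no reason for $M(u_j^0)$ to coincide with the soliton mass, and the Radon--Riesz argument of the third step would collapse; the rest is a fairly routine combination of compact Sobolev embeddings, propagation of regularity for \eqref{eq:nls}, and $L^2$--$H^1$ interpolation.
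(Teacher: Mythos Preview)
Your argument is correct, but it proceeds along a genuinely different route from the paper's.

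The paper proves $L^2$ compactness by a direct \emph{tightness} argument at time $T_0$: it fixes $\delta>0$, goes forward to a time $T_\delta$ where the uniform estimate \eqref{eq:estimate} forces $(u_1^n(T_\delta),u_2^n(T_\delta))^\intercal$ to be $\delta$-close to $(R_1(T_\delta),R_2(T_\delta))^\intercal$ (hence spatially localised), and then propagates this localisation backwards to $T_0$ by controlling the time derivative of a truncated mass $V(t)=\tfrac12\int(|u_1^n|^2+|u_2^n|^2)\tau(\tfrac{|x|-\rho_\delta}{\kappa_\delta})\,dx$. One integration by parts gives $|V'(t)|\leq C/\kappa_\delta$, and choosing $\kappa_\delta$ large produces uniform $L^2$ smallness outside a fixed ball. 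This is entirely self-contained at the level of Proposition~\ref{prop:compactness}: no limiting solution is ever constructed.

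Your approach instead uses the exact mass identity $M(u_j^n(T_0))=M(R_j(T^n))=M(R_j)$---which is indeed constant in $n$ because the final data are exactly the solitons---and then matches it with $M(u_j^0)$ by building the limit solution $u$, transferring \eqref{eq:estimate} to it, and reading off $M(u_j^0)=M(R_j)$ from mass conservation plus the large-time asymptotics. Radon--Riesz then closes. This is perfectly valid; the only care needed in your step~2 is to identify the Aubin--Lions limit with the unique $H^1$ solution from $(u_1^0,u_2^0)^\intercal$ so that mass conservation is available, which the local theory in the paper provides.

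What each buys: the paper's virial argument is shorter and keeps Proposition~\ref{prop:compactness} logically prior to and independent of the construction of the limiting multi-speed solitary wave (that construction is then a two-line consequence in the proof of Theorem~\ref{thm}). Your argument is arguably more conceptual---it pinpoints exactly why no mass can escape, namely the rigid mass identity coming from the soliton final data---but it effectively absorbs the proof of Theorem~\ref{thm} into Proposition~\ref{prop:compactness}, so the overall architecture is rearranged rather than shortened.
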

We can now prove Theorem \ref{thm}.
\begin{proof}[Proof of Theorem \ref{thm}]
Let $(u^0_1,u^0_2)^\intercal$ be the initial data given by Proposition \ref{prop:compactness} and let $(u_1,u_2)^\intercal$ be the solution to \eqref{eq:nls} on $[T_0,T^\infty)$  with initial data $(u_1(T_0),u_2(T_0))^\intercal=(u^0_1,u^0_2)^\intercal$. We show that $T^\infty=+\infty$ and that $(u_1,u_2)^\intercal$ fulfils the conclusions of Theorem \ref{thm}. From Proposition \ref{prop:compactness}, the local well-posedness theory for \eqref{eq:nls}, and the boundedness in $\hu\times\hu$ (implied by Proposition \ref{prop:uniform}), we  have for  $t\in[T_0,T^\infty)$ the convergences
\[
\begin{pmatrix}u^n_1(t)\\u^n_2(t)\end{pmatrix}
\begin{array}{c}
\xrightarrow{H^s\times H^s}\\[-2mm]
\xrightharpoondown[H^1\times H^1]{}
\end{array}
\begin{pmatrix}u_1(t)\\u_2(t)\end{pmatrix},
\]
where the convergence is taken strongly in $H^s(\R^d)\times H^s(\R^d)$ for any $0\leq s<1$ and weakly in $\hu\times\hu$. Consequently, we can estimate for all $t\in[T_0,T^\infty)$: 
\[
\norm*{\begin{pmatrix}u_1(t)\\u_2(t)\end{pmatrix}
-\begin{pmatrix}R_1(t)\\R_2(t)\end{pmatrix}
}_{H^1\times H^1}
\leq
\liminf_{n\to+\infty}\norm*{\begin{pmatrix}u^n_1(t)\\u^n_2(t)\end{pmatrix}
-\begin{pmatrix}R_1(t)\\R_2(t)\end{pmatrix}
}_{H^1\times H^1}
\leq
e^{-\sqrt{\omega_\star}v_\star t}.
\]
In particular, this implies that $(u_1(t),u_2(t))^\intercal$ is bounded in $\hu\times\hu$ on $[T_0,T^\infty)$. Hence, the blow-up alternative implies that $T^\infty=+\infty$ and therefore $(u_1,u_2)^\intercal$ satisfies the conclusions of Theorem \ref{thm}.
\end{proof}

\section{Uniform Estimates}\label{sec:uniform}

In this section, we prove Proposition \ref{prop:uniform}.
From the local well-posedness theory, estimate \eqref{eq:estimate} always holds on some short interval around $T^n$. The goal of the following Lemma~is to allow us to stretch this interval up to the interval $[T_0,T^n]$. 
\begin{lem}[Bootstrap]\label{lem:bootstrap}
There exists $v_\sharp$ such that if $v_\star>v_\sharp$, then there exists $T_0\in\R$ and $n_0\in\N$ such that for all $n\geq n_0$ the following property is satisfied for any $t_0\in[T_0,T^n]$. \\ If for all $t\in[t_0,T^n]$ we have
\begin{align*}
\norm*{\begin{pmatrix}u^n_1(t)\\u^n_2(t)\end{pmatrix}-\begin{pmatrix}R_1(t)\\R_2(t)\end{pmatrix}}_{H^1\times H^1}& \leq 
 e^{-\sqrt{\omega_\star}v_\star t},\\
\intertext{then for all $t\in[t_0,T^n]$  we have}
\norm*{\begin{pmatrix}u^n_1(t)\\u^n_2(t)\end{pmatrix}-\begin{pmatrix}R_1(t)\\R_2(t)\end{pmatrix}}_{H^1\times H^1} &\leq 
\frac12 e^{-\sqrt{\omega_\star}v_\star t}.
\end{align*}
\end{lem}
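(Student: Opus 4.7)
The plan is a Lyapunov/bootstrap argument based on the scalar actions $S_j:=E(\cdot,\mu_j)+(\omega_j+|v_j|^2/4)M+v_j\cdot P$ introduced in Section~\ref{sec:nls-scalar}, combined with the scalar coercivity of Lemma~\ref{lem:coercivity} applied around each soliton, and the fact that the two solitons become exponentially well-separated in space for $t$ negative enough.

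Set $\varepsilon_j(t):=u_j^n(t)-R_j(t)$, so that the bootstrap hypothesis reads $\norm{\varepsilon_j(t)}_{H^1}\leq e^{-\sqrt{\omega_\star}v_\star t}$ on $[t_0,T^n]$, while the terminal condition gives $\varepsilon_j(T^n)=0$. First I would introduce the two-component action
\[
\mathcal F(t):=S_1(u_1^n(t))+S_2(u_2^n(t))-\tfrac{\beta}{2}\int_{\R^d}|u_1^n|^2|u_2^n|^2\,dx.
\]
Since $R_j$ is a critical point of $S_j$, a Taylor expansion of $\mathcal F$ around $(R_1,R_2)^\intercal$ gives
\[
\mathcal F(t)=S_1(R_1)+S_2(R_2)+\tfrac12\bigl(H_1(t,\varepsilon_1)+H_2(t,\varepsilon_2)\bigr)+N(t)-\tfrac{\beta}{2}\int|u_1^n|^2|u_2^n|^2,
\]
where $S_j(R_j)$ is constant in $t$ and the cubic remainder obeys $|N(t)|\leq C(\norm{\varepsilon_1}_{H^1}^3+\norm{\varepsilon_2}_{H^1}^3)$.

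Next I would bound $d\mathcal F/dt$. Using the conservation laws \eqref{eq:conservE}--\eqref{eq:conservM}, the contributions of $\mathcal E$, of $M(u_j^n)$ and of the total momentum $\mathcal P=P(u_1^n)+P(u_2^n)$ all drop out, leaving only $(v_1-v_2)\cdot\frac{d}{dt}P(u_1^n)$, which by a direct computation on \eqref{eq:nls} is a $\beta$-weighted overlap integral coupling $|u_1^n|^2$ with derivatives of $|u_2^n|^2$. Substituting $u_j^n=R_j+\varepsilon_j$, invoking the exponential decay of $\Phi_j$ recalled in Section~\ref{sec:nls-scalar} with $\eta$ close to $1$, and using that the soliton centers are separated by at least $\gtrsim v_\star|t|$, this derivative---and the pure coupling integral $\int|u_1^n|^2|u_2^n|^2$ itself---is bounded by $Ce^{-2\sqrt{\omega_\star}v_\star t}$ once $T_0$ is chosen sufficiently negative. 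Integrating from $t$ back to $T^n$ and using $\varepsilon_j(T^n)=0$ yields
\[
H_1(t,\varepsilon_1(t))+H_2(t,\varepsilon_2(t))\leq Ce^{-2\sqrt{\omega_\star}v_\star t}+C\bigl(\norm{\varepsilon_1}_{H^1}^3+\norm{\varepsilon_2}_{H^1}^3\bigr).
\]

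Applying Lemma~\ref{lem:coercivity} to each soliton, the left-hand side controls $\norm{\varepsilon_1}_{H^1}^2+\norm{\varepsilon_2}_{H^1}^2$ up to the finite collection of projection terms $\psld{\varepsilon_j(t)}{\xi_j^k(t)}^2$, and handling these is the main obstacle. The directions fixed by conservation laws---one from $M(u_j^n)=M(R_j)$ for each $j$, and one combination coming from $\mathcal P(u^n)=\mathcal P(R_1,R_2)+\mathcal O(e^{-2\sqrt{\omega_\star}v_\star T^n})$---are bounded immediately by Taylor-expanding these identities to first order in $\varepsilon_j$. For the remaining directions (kernel and negative eigenspaces of the scalar linearized operators that are not pinned by a conservation law), I would compute $\frac{d}{dt}\psld{\varepsilon_j}{\xi_j^k(t)}$ from the equation satisfied by $\varepsilon_j$, bound it by an interaction term of size $Ce^{-2\sqrt{\omega_\star}v_\star t}$ plus a term of the form $C\norm{\varepsilon}_{H^1}$, and integrate backwards from $T^n$, producing a bound of order $v_\star^{-1}e^{-\sqrt{\omega_\star}v_\star t}$. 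Collecting these estimates and choosing $v_\sharp$ sufficiently large and $T_0$ sufficiently negative absorbs the cubic remainder, the interaction, and the projection contributions into $\tfrac14 e^{-2\sqrt{\omega_\star}v_\star t}$; taking square roots gives the improved bound $\norm{\varepsilon_j(t)}_{H^1}\leq\tfrac12 e^{-\sqrt{\omega_\star}v_\star t}$ and closes the bootstrap. The large-speed condition $v_\star>v_\sharp$ is precisely what is needed to tame the non-conservative projection directions without resorting to modulation theory.
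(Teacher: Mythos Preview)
Your overall architecture---Taylor-expand an action functional around $(R_1,R_2)^\intercal$, control its drift by the conserved quantities of the system, and feed this into the coercivity of Lemma~\ref{lem:coercivity}---is essentially the paper's strategy. Two points deserve comment.

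\textbf{The sign of $T_0$.} You twice write ``$T_0$ sufficiently negative''. This is a genuine error about the geometry of the construction. The soliton centers sit at $v_jt+x_j$, so they separate as $t\to+\infty$; the approximate solutions are run \emph{backwards} from large $T^n$ down to $T_0$, but $T_0$ itself must be large and \emph{positive}. For negative $t$ the bootstrap quantity $e^{-\sqrt{\omega_\star}v_\star t}$ is large, the hypothesis becomes vacuous, and the interaction bound $Ce^{-2\sqrt{\omega_\star}v_\star t}$ carries no information. The paper takes $T_0>0$ large enough that, for instance, $v_\star(1+|v_1|+|v_2|)^4e^{-\frac12\sqrt{\omega_\star}v_\star T_0}<1$.

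\textbf{Control of the projections $\psld{\varepsilon_j}{\xi_j^k(t)}$.} Here your route genuinely differs from the paper's. You propose to identify some of the $\xi_j^k$ with directions singled out by mass or momentum conservation and to treat the rest by differentiating $\psld{\varepsilon_j}{\xi_j^k(t)}$ in time. Two caveats: the eigenfunctions $\xi_j^k$ of $L_\pm$ need not coincide with the mass/momentum directions (especially for the excited-state profiles $\Phi_j$ that the theorem allows), and $\partial_t\xi_j^k(t)$ carries a factor of $|v_j|$ from the translation, so keeping the constant $C$ in your ``$C\norm{\varepsilon}_{H^1}$'' independent of $v_1,v_2$ requires passing carefully to the co-moving frame. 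The paper avoids all of this by a single estimate (Lemma~\ref{lem:L2-control}): differentiate $M(\varepsilon_j)=\tfrac12\norm{\varepsilon_j}_{L^2}^2$ in time and observe that the self-adjoint pieces of the linearized operator drop out of $\Im\int L_1(\varepsilon)\bar\varepsilon_1\,dx$, so $|\partial_tM(\varepsilon_j)|\leq Ce^{-2\sqrt{\omega_\star}v_\star t}$ with $C$ independent of $v_\star$. One time-integration gives $\norm{\varepsilon_j}_{L^2}^2\leq\frac{C}{\sqrt{\omega_\star}v_\star}e^{-2\sqrt{\omega_\star}v_\star t}$, and then \emph{every} projection is bounded at once by $\psld{\varepsilon_j}{\xi_j^k(t)}^2\leq\norm{\varepsilon_j}_{L^2}^2$. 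The large-speed assumption $v_\star>v_\sharp$ enters precisely through the $1/v_\star$ gained in this integration (and in the analogous Almost Conservation Law, Lemma~\ref{lem:conservation}), not through any direction-by-direction bookkeeping.
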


Before going further, we indicate how Lemma~\ref{lem:bootstrap} is used to prove Proposition \ref{prop:uniform}. 

\begin{proof}[Proof of Proposition \ref{prop:uniform}]
Let $T_0$, $n_0$, $v_\sharp$ be given by Lemma~\ref{lem:bootstrap}, fix $n>n_0$ and assume $v_\star>v_\sharp$. Define
\[
t_\sharp :=\inf\{t_\dag\text{ such that \eqref{eq:estimate} holds for all }t\in[t_\dag,T^n]\}.
\]
From the local well-posedness theory we know that $t_\sharp <T^n$. We prove by contradiction that $t_\sharp =T_0$. Assume that $t_\sharp >T_0$. By Lemma~\ref{lem:bootstrap}, for all $t\in[t_\sharp ,T^n]$  we have
\begin{equation*}
\norm*{\begin{pmatrix}u^n_1(t)\\u^n_2(t)\end{pmatrix}-\begin{pmatrix}R_1(t)\\R_2(t)\end{pmatrix}}_{H^1\times H^1} \leq 
\frac12 e^{-\sqrt{\omega_\star}v_\star t}.
\end{equation*}
Therefore, by continuity of $(u^n_1,u^n_2)^\intercal$, there exists $t_\ddagger<t_\sharp $ such that \eqref{eq:estimate} holds on $[t_\ddagger,T^n]$, hence contradicting the minimality of $t_\sharp $. As a consequence, $t_\sharp =T_0$ and the proposition is proved.
\end{proof}

Before proving Lemma~\ref{lem:bootstrap}, we need some preparation. We will work for fixed $n$, hence dependency in $n$ will only be understood, except for $T^n$. In particular, we shall denote $u^n_1$ by $u_1$, etc. Let $(\eps_1,\eps_2)^\intercal\in\hu\times\hu$ be such that
\begin{equation}\label{eq:def-eps}
\begin{pmatrix}
u_1\\u_2\end{pmatrix}=\begin{pmatrix}
R_1\\R_2\end{pmatrix}+\begin{pmatrix}
\eps_1\\\eps_2\end{pmatrix}.
\end{equation}
Take $t_0<T^n$ and assume the following \emph{bootstrap hypothesis}:
\begin{equation}\label{eq:bootstrap}
\norm*{\begin{pmatrix}
\eps_1(t)\\\eps_2(t)\end{pmatrix}}_{H^1\times H^1} \leq 
 e^{-\sqrt{\omega_\star}v_\star t}\quad\text{for all }t\in[t_0,T^n].
\end{equation}

For $j=1,2$, we denote by $S_j$ and $H_j$ the functionals defined for the solitary wave  $R_j$ in the same way as $S_0$ and $H_0$ were for $R_0$ in \eqref{eq:def-S} and \eqref{eq:def-H}. Note that, conversely to what was happening in the works \cite{CoLe11,CoMaMe11,MaMe06}, we do not need to localize the functionals around each solitary wave, since in our case the coupling will act as a localizing factor.
Let $\mathcal S$ be the functional defined for $(w_1,w_2)^\intercal\in\hu\times\hu$ by
\begin{equation}\label{eq:def-mathcal-S}
\mathcal S\begin{pmatrix}
w_1\\w_2\end{pmatrix}
:=
S_1(w_1)+S_2(w_2).
\end{equation}
and $\mathcal H$ be the functional defined for $(t,(\varpi_1,\varpi_2)^\intercal)\in\R\times\hu\times\hu$ by
\[
\mathcal H\left(t,\begin{pmatrix}
\varpi_1\\
\varpi_2\end{pmatrix}\right)
:=
H_1(t,\varpi_1)+H_2(t,\varpi_2).
\]
A direct consequence of Lemma~\ref{lem:coercivity} on $\mathcal H$ is the following result.

\begin{lem}[Vectorial Coercivity]\label{lem:coercivity-system}
There exists $c_\star>0$ such that 
for all $t\in\R$ and for all $(\varpi_1, \varpi_2)^\intercal\in\hu\times\hu$ we have:
\begin{equation}\label{eq:coercivity-property}
c_\star\norm*{\begin{pmatrix}
\varpi_1\\ \varpi_2\end{pmatrix}}^2_{H^1\times H^1} \leq\mathcal H\left(t,\begin{pmatrix}
\varpi_1\\ \varpi_2\end{pmatrix}\right)+\sum_{j=1,2}\sum_{k=1}^{\nu_j}\psld{\varpi_j}{\xi_j^k(t)}^2,
\end{equation}
where $(\xi_j^k)$ are given for for $j=1,2$ by Lemma \ref{lem:coercivity}.
\end{lem}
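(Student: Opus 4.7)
The plan is to reduce the vectorial coercivity immediately to two instances of the scalar coercivity Lemma~\ref{lem:coercivity}, one for each component. The key observation is that the functional $\mathcal H$ is additively decoupled in the two components by construction: $\mathcal H(t,(\varpi_1,\varpi_2)^\intercal)=H_1(t,\varpi_1)+H_2(t,\varpi_2)$, with no cross term between $\varpi_1$ and $\varpi_2$. So there is no genuine coupling to handle at the level of the quadratic form; the coupling will enter the analysis later, through the nonlinear terms in the bootstrap argument.

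First, I apply Lemma~\ref{lem:coercivity} to the scalar solitary wave $R_j$ (with parameters $\omega_j,\gamma_j,x_j,v_j,\mu_j$ and profile $\Phi_j$) for each $j=1,2$. This yields constants $c_j>0$, integers $\nu_j\in\N$, and normalized families $\{\xi_j^k(t)\}_{k=1}^{\nu_j}$ such that, for every $t\in\R$ and every $\varpi_j\in\hu$,
\[
c_j\norm{\varpi_j}_{H^1}^2 \leq H_j(t,\varpi_j)+\sum_{k=1}^{\nu_j}\psld{\varpi_j}{\xi_j^k(t)}^2.
\]
These are exactly the $\xi_j^k(t)$ that appear on the right-hand side of \eqref{eq:coercivity-property}, so there is nothing to construct beyond what Lemma~\ref{lem:coercivity} already provides.

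Next, I sum the two scalar inequalities. Setting $c_\star:=\min\{c_1,c_2\}>0$, the left-hand side becomes
\[
c_\star\bigl(\norm{\varpi_1}_{H^1}^2+\norm{\varpi_2}_{H^1}^2\bigr)=c_\star\norm*{\begin{pmatrix}\varpi_1\\\varpi_2\end{pmatrix}}^2_{H^1\times H^1},
\]
while the right-hand side is exactly
\[
H_1(t,\varpi_1)+H_2(t,\varpi_2)+\sum_{j=1,2}\sum_{k=1}^{\nu_j}\psld{\varpi_j}{\xi_j^k(t)}^2=\mathcal H\left(t,\begin{pmatrix}\varpi_1\\\varpi_2\end{pmatrix}\right)+\sum_{j=1,2}\sum_{k=1}^{\nu_j}\psld{\varpi_j}{\xi_j^k(t)}^2,
\]
by the definition \eqref{eq:def-mathcal-S} and the definition of $\mathcal H$. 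This yields \eqref{eq:coercivity-property} and concludes the proof.

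There is no serious obstacle here: the lemma is essentially a bookkeeping statement that packages the two scalar coercivities into a single vectorial one. The only mild point worth noting is that the constants $c_1,c_2$ produced by Lemma~\ref{lem:coercivity} depend on the individual solitary wave parameters $(\omega_j,v_j,\mu_j,\Phi_j)$, but since these are fixed once and for all from the beginning of Section~\ref{sec:construction}, taking their minimum poses no difficulty and gives a uniform $c_\star>0$ valid for every $t\in\R$.
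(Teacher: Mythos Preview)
Your proposal is correct and matches the paper's approach exactly: the paper states the lemma as ``a direct consequence of Lemma~\ref{lem:coercivity}'' without further proof, and your argument---applying the scalar coercivity to each component separately, summing, and taking $c_\star=\min\{c_1,c_2\}$---is precisely how one unpacks that direct consequence.
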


Note that the use of coercivity properties is reminiscent from the stability theory for standing waves of scalar nonlinear Schr\"odinger equation developed in \cite{CaLi82,GrShSt87,We85,We86}. However, in this theory, the functional equivalent to $\mathcal{S}$ is a conserved quantity, which is not the case for $\mathcal{S}$ (remark that $\mathcal S$ is build upon the conserved quantities of the scalar problem and not upon those of \eqref{eq:nls} given in \eqref{eq:conservE}-\eqref{eq:conservM}). However, we will still be able to estimate
the RHS of \eqref{eq:coercivity-property} thanks to an $L^2(\R^d)$-control (to deal with the scalar products) and thanks  to the fact that $\mathcal S$ is almost a conservation law (to deal with~$\mathcal H$).
\begin{lem}[$L^2(\R^d)$-control]\label{lem:L2-control}
Let $(\eps_1,\eps_2)^\intercal$ be given by \eqref{eq:def-eps} and assume \eqref{eq:bootstrap}. 
Then there exists $C>0$ independent of $v_\star$ such that for all $t\in [t_0,T^n]$ the following estimate holds:
\[
\norm*{\begin{pmatrix}
\eps_1(t)\\ \eps_2(t)\end{pmatrix}
}_{L^2\times L^2}
\leq
\frac{C}{\sqrt{\omega_\star}v_\star} e^{-\sqrt{\omega_\star}v_\star t}.
\]
\end{lem}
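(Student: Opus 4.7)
The plan is to combine conservation of individual masses with a bootstrap on the $L^2$-norm. Since $\int|u_k|^2|u_j|^2$ is real, each mass $M(u_j(t))$ is independently conserved by~\eqref{eq:nls}. Combined with the final data $u^n_j(T^n)=R_j(T^n)$ and the time-invariance of $M(R_j)$, this gives $\norm{u^n_j(t)}_{L^2}=\norm{R_j(t)}_{L^2}$ on $[t_0,T^n]$, and therefore the exact identity
\[
\norm{\eps_j(t)}_{L^2}^2 = -2\Re\bigl(R_j(t),\eps_j(t)\bigr)_{L^2}.
\]
It is thus enough to control $\Re(R_j,\eps_j)_{L^2}$.

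I would then compute $\tfrac{d}{ds}\norm{\eps_j(s)}_{L^2}^2$ from the equations satisfied by $u^n_j$ and $R_j$. The Laplacian contribution vanishes by integration by parts. Expanding $|u_j|^2 u_j - |R_j|^2 R_j$ in powers of $\eps_j$ and combining conjugate pairs into imaginary parts, one is left with
\[
\tfrac{d}{ds}\norm{\eps_j}_{L^2}^2 = -2\mu_j\Im\!\int R_j^2\overline{\eps_j}^2 - 2\mu_j\Im\!\int R_j|\eps_j|^2\overline{\eps_j} - 2\beta\Im\!\int|u_k|^2 u_j\overline{\eps_j}.
\]
Integrating from $t$ to $T^n$ and using $\norm{\eps_j(T^n)}_{L^2}=0$ expresses $\norm{\eps_j(t)}_{L^2}^2$ as the integral of these three terms over $[t,T^n]$. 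The cubic term is estimated via Sobolev embedding and \eqref{eq:bootstrap} by $C\norm{\eps_j}_{H^1}^3\leq Ce^{-3\sqrt{\omega_\star}v_\star s}$. For the coupling, after expanding $|u_k|^2 u_j = |R_k|^2 R_j + \cdots$, the leading contribution $\int|R_k|^2 R_j\overline{\eps_j}$ is controlled by noting that the two soliton centres $v_j s+x_j$ and $v_k s+x_k$ are separated by $\gtrsim v_\star s$; combined with the exponential decay of $\Phi_1,\Phi_2$ this gives $\norm{|R_k|^2 R_j}_{L^2}\lesssim e^{-\sqrt{\omega_\star}v_\star s}$, and Cauchy--Schwarz bounds the coupling by $Ce^{-\sqrt{\omega_\star}v_\star s}\norm{\eps_j}_{L^2}$ up to lower-order terms in $\eps$. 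The quadratic self-term $|\Im\!\int R_j^2\overline{\eps_j}^2|$ only admits the trivial bound $C\norm{\eps_j}_{L^2}^2$.

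To extract the $(\sqrt{\omega_\star}v_\star)^{-1}$ factor, I would bootstrap directly on the $L^2$-norm: on a maximal subinterval of $[t_0,T^n]$ assume
\[
\norm{\eps_l(s)}_{L^2}\leq \frac{K}{\sqrt{\omega_\star}v_\star}\,e^{-\sqrt{\omega_\star}v_\star s},\qquad l=1,2,
\]
for some constant $K$ independent of $v_\star$. Plugging into the integrated identity, the coupling yields the leading contribution of order $\tfrac{CK}{\omega_\star v_\star^2}e^{-2\sqrt{\omega_\star}v_\star t}$; the quadratic self-term contributes an extra factor $(\sqrt{\omega_\star}v_\star)^{-1}$ and is negligible for $v_\star$ large, while the cubic term decays even faster. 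Taking square roots and picking $K$ large enough, the bootstrap closes provided $v_\star$ is sufficiently large; continuity together with the initial condition $\eps_j(T^n)=0$ then propagates the bound to the whole interval $[t_0,T^n]$.

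The main obstacle is the quadratic self-term $\Im\!\int R_j^2\overline{\eps_j}^2$: unlike the coupling, it carries no smallness from the soliton separation. Using only the $H^1$-bootstrap \eqref{eq:bootstrap} in it would give the weaker bound $\norm{\eps_j}_{L^2}\lesssim(\sqrt{\omega_\star}v_\star)^{-1/2}e^{-\sqrt{\omega_\star}v_\star t}$, off by a factor $(\sqrt{\omega_\star}v_\star)^{1/2}$; it is exactly this loss that the $L^2$-bootstrap recovers by feeding the improved information back into the right-hand side.
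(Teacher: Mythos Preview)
Your argument is correct and in fact goes further than the paper's. Both proofs share the same skeleton: differentiate $\norm{\eps_j}_{L^2}^2$ in time, estimate the three resulting contributions (linear/quadratic, higher order, and the source term $\beta|R_k|^2R_j$), and integrate from $t$ to $T^n$ using $\eps_j(T^n)=0$ to gain the factor $(\sqrt{\omega_\star}v_\star)^{-1}$. The difference lies precisely in the quadratic self-term $\Im\int R_j^2\bar\eps_j^{\,2}$ that you single out. The paper does \emph{not} run an auxiliary $L^2$-bootstrap: it simply uses the ambient $H^1$-hypothesis~\eqref{eq:bootstrap} to bound this term by $C\norm{\eps_j}_{H^1}^2\leq Ce^{-2\sqrt{\omega_\star}v_\star t}$, integrates, and stops at
\[
M(\eps_j(t))=\tfrac12\norm{\eps_j(t)}_{L^2}^2\leq \frac{C}{\sqrt{\omega_\star}v_\star}\,e^{-2\sqrt{\omega_\star}v_\star t}.
\]
That is exactly the ``weaker bound'' $\norm{\eps_j}_{L^2}\lesssim(\sqrt{\omega_\star}v_\star)^{-1/2}e^{-\sqrt{\omega_\star}v_\star t}$ you identify in your last paragraph, and it is all that is actually used downstream (in Lemma~\ref{lem:conservation} and in the proof of Lemma~\ref{lem:bootstrap} only $\norm{\eps_j}_{L^2}^2\leq C(\sqrt{\omega_\star}v_\star)^{-1}e^{-2\sqrt{\omega_\star}v_\star t}$ is invoked). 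So the paper's route is shorter but yields less than the lemma literally states; your $L^2$-bootstrap is the extra ingredient needed to match the stated power $(\sqrt{\omega_\star}v_\star)^{-1}$ on the norm itself, and it closes as you describe since the dangerous quadratic term carries the additional factor $(\sqrt{\omega_\star}v_\star)^{-1}$ under the bootstrap hypothesis. Two minor remarks: your opening mass-conservation identity $\norm{\eps_j}_{L^2}^2=-2\Re(R_j,\eps_j)_{L^2}$ is correct but plays no role afterwards, since you differentiate $\norm{\eps_j}_{L^2}^2$ directly; and the paper uses the sharper interaction estimate $\norm{|R_1||R_2|}_{L^2}\leq Ce^{-\frac32\sqrt{\omega_\star}v_\star t}$ (Lemma~\ref{lem:exponential-decay}) for the source term, which makes that contribution subleading without any appeal to the $L^2$-bootstrap.
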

\begin{lem}[Almost Conservation Law]\label{lem:conservation}
	Assume \eqref{eq:bootstrap}. There exists $T_0>0$ depending only on $v_1,v_2$ such that 
	if $t_0>T_0$ then there exists $C>0$ independent of $n$ and of $v_\star$ such that for all $t\in [t_0,T^n]$ the following estimate holds:
	\begin{equation}\label{eq:conservation}
	\abs*{\mathcal S\begin{pmatrix}
	u_1(t)\\u_2(t)\end{pmatrix}-
	\mathcal S\begin{pmatrix}
	u_1(T^n)\\u_2(T^n)\end{pmatrix}}
	\leq \frac{C}{\sqrt{\omega_\star}v_\star} e^{-2\sqrt{\omega_\star}v_\star t}.
	\end{equation}
\end{lem}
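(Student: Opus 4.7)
The plan is to compute $\frac{d}{dt}\mathcal{S}(u_1,u_2)$ explicitly, identify each summand as a genuine two-body interaction between $u_1$ and $u_2$, bound it by $Ce^{-2\sqrt{\omega_\star}v_\star t}$ thanks to the spatial separation of the soliton centers, and integrate from $t$ to $T^n$; the factor $(2\sqrt{\omega_\star}v_\star)^{-1}$ in \eqref{eq:conservation} comes from this final integration.

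The algebraic step exploits the system's own conservation laws. Using \eqref{eq:conservE}--\eqref{eq:conservM}, one rewrites
\begin{equation*}
\mathcal S\begin{pmatrix}u_1\\u_2\end{pmatrix}=\mathcal E\begin{pmatrix}u_1\\u_2\end{pmatrix}+\frac{\beta}{2}\int_{\R^d}|u_1|^2|u_2|^2+\sum_{j=1,2}\Big(\omega_j+\tfrac{|v_j|^2}{4}\Big)M(u_j)+v_1\cdot P(u_1)+v_2\cdot P(u_2),
\end{equation*}
so that only the coupling integral and the weighted momenta contribute to $\frac{d}{dt}\mathcal S$. A direct calculation from \eqref{eq:nls} combined with integration by parts gives the momentum evolution $\frac{d}{dt}P(u_j)=-\frac{\beta}{2}\int_{\R^d}|u_j|^2\nabla(|u_k|^2)$ for $\{j,k\}=\{1,2\}$ (the scalar nonlinearities $\mu_j|u_j|^2u_j$ only produce exact derivatives), while the continuity equation $\partial_t|u_j|^2=-2\,\mathrm{div}\,\Im(\bar u_j\nabla u_j)$, which survives the real coupling $\beta|u_k|^2$, yields $\frac{d}{dt}\int|u_1|^2|u_2|^2=2\int\Im(\bar u_1\nabla u_1)\cdot\nabla|u_2|^2+2\int\Im(\bar u_2\nabla u_2)\cdot\nabla|u_1|^2$. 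Each summand of $\frac{d}{dt}\mathcal S$ is thus a space integral of a product involving factors from \emph{both} components.

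Writing $u_j=R_j+\eps_j$ and expanding, each such integral becomes a finite sum of integrals of four-factor products drawn from $\{R_j,\nabla R_j,\eps_j,\nabla\eps_j\}$, with at least one index-$1$ factor and one index-$2$ factor. The Berestycki--Lions decay $|R_j|+|\nabla R_j|(t,x)\leq Ce^{-\eta\sqrt{\omega_j}|x-v_jt-x_j|}$ with $\eta$ close to $1$, combined with $\sqrt{\omega_j}\geq 2\sqrt{\omega_\star}$ (precisely what the factor $\tfrac14$ in the definition of $\omega_\star$ buys) and the triangle inequality $|x-v_1t-x_1|+|x-v_2t-x_2|\geq|(v_1-v_2)t+x_1-x_2|$, yields a pointwise soliton-product bound
\begin{equation*}
(|R_1|+|\nabla R_1|)(t,x)\,(|R_2|+|\nabla R_2|)(t,x)\leq Ce^{-2\sqrt{\omega_\star}v_\star t}\cdot e^{-\delta_0\sqrt{\omega_\star}(|x-v_1t-x_1|+|x-v_2t-x_2|)},
\end{equation*}
for some $\delta_0>0$, provided $T_0$ is chosen large enough (depending only on $v_1,v_2$ and the fixed $x_1,x_2$) that $|(v_1-v_2)t+x_1-x_2|\geq v_\star t$ for all $t\geq T_0$. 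Pure $(R_1,R_2)$-contributions are then controlled by integrating this bound in $x$, the residual exponential providing $L^1_x$-integrability; mixed contributions carrying $\eps_j$-factors are handled by factoring out an $L^\infty_x$-norm of a soliton product (still $\leq Ce^{-2\sqrt{\omega_\star}v_\star t}$) and then estimating the remaining $\eps_j$-pieces via the bootstrap hypothesis \eqref{eq:bootstrap}, together with Sobolev embeddings where needed. Adding up and integrating from $t$ to $T^n$ yields \eqref{eq:conservation}. The main obstacle I expect is the careful bookkeeping of these many mixed terms (particularly those coming from the momentum part with its gradient factors) together with the sharp management of the decay rate, so that the pointwise interaction bound indeed reaches the exponent $2\sqrt{\omega_\star}v_\star$ without loss.
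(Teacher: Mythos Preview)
Your overall architecture---use the system's conservation laws to isolate the coupling integral and the weighted momenta, then control what remains---is exactly the paper's. But there are two concrete gaps, and the first one is fatal in dimensions $d\geq2$.

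\textbf{First gap (the energy/coupling part).} You differentiate $\int|u_1|^2|u_2|^2$ in time and obtain $2\int\Im(\bar u_1\nabla u_1)\cdot\nabla|u_2|^2+\text{sym}$. After expanding $u_j=R_j+\eps_j$, the pure-$\eps$ contribution is
\[
\int_{\R^d}\Im(\bar\eps_1\nabla\eps_1)\cdot\Re(\bar\eps_2\nabla\eps_2)\,dx,
\]
a four-factor product carrying \emph{two} gradients of $\eps$'s. There is no H\"older splitting $(p_1,p_2,p_3,p_4)$ with $\sum 1/p_i=1$ that places both $\nabla\eps_1$ and $\nabla\eps_2$ in $L^2$ (this would force $1/p_1+1/p_3=0$), and $H^1$ gives no control on $\nabla\eps_j$ in any $L^q$ with $q>2$. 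So this term cannot be bounded using \eqref{eq:bootstrap} alone unless $d=1$ (where $H^1\hookrightarrow L^\infty$). The paper flags exactly this obstruction and circumvents it by \emph{not} differentiating: since $\mathcal E$ is conserved, it suffices to bound $\int|u_1(t)|^2|u_2(t)|^2$ itself (and at $t=T^n$), which carries no gradients and can be estimated directly.

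\textbf{Second gap (terms with only one soliton factor).} Your scheme for mixed terms---``factor out an $L^\infty$-norm of a soliton product''---presupposes that both $R_1$ and $R_2$ appear. But several terms carry a single $R_j$ and two $\eps$'s from the other component, e.g.\ $\int|R_1|^2|\eps_2|^2$ in the coupling integral, or $v_\star\int|\eps_2|^2\nabla|R_1|^2$ in the momentum part. Here there is no interaction decay to harvest; the best pointwise bound is $\norm{\,|R_1|^2}_{L^\infty}\norm{\eps_2}_{L^2}^2$, and \eqref{eq:bootstrap} alone gives only $Ce^{-2\sqrt{\omega_\star}v_\star t}$. For the momentum term this comes multiplied by $v_\star$, and for the coupling term (estimated directly, without a time integration) there is no $1/v_\star$ gain---either way you miss the required $\frac{C}{\sqrt{\omega_\star}v_\star}e^{-2\sqrt{\omega_\star}v_\star t}$ by a full factor of $v_\star$, so $C$ would depend on $v_\star$. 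The paper closes this by invoking the improved $L^2$-control (Lemma~\ref{lem:L2-control}), which supplies precisely the missing $\frac{1}{\sqrt{\omega_\star}v_\star}$ on $\norm{\eps_j}_{L^2}$.
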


Before showing Lemmas \ref{lem:L2-control} and \ref{lem:conservation}, we prove Lemma~\ref{lem:bootstrap}.

\begin{proof}[Proof of Lemma~\ref{lem:bootstrap}]
Let $(\eps_1,\eps_2)^\intercal$ be given by \eqref{eq:def-eps}, assume \eqref{eq:bootstrap} and assume also that $t_0>T_0$ where $T_0$ is given by Lemma \ref{lem:conservation}. 
Let $t\in[t_0,T^n]$.
By Lemma~\ref{lem:coercivity-system}, we have the following estimate
\begin{equation}\label{eq:coer}
c_\star\norm*{\begin{pmatrix}
\eps_1(t)\\ \eps_2(t)\end{pmatrix}}^2_{H^1\times H^1} \leq\mathcal H\left(t,\begin{pmatrix}
\eps_1(t)\\ \eps_2(t)\end{pmatrix}\right)+\sum_{j=1,2}\sum_{k=1}^{\nu_j}\psld{\eps_j(t)}{\xi_j^k(t)}^2.
\end{equation}
Using that $R_1$ and $R_2$ are critical points of $S_1$ and $S_2$, we have
\begin{multline}\label{eq:combine-1}
\mathcal S\begin{pmatrix}
u_1(t)\\u_2(t)\end{pmatrix}=\mathcal S\begin{pmatrix}
R_1(t)+\eps_1(t)\\R_2(t)+\eps_2(t)\end{pmatrix}=\\
\mathcal S\begin{pmatrix}
R_1(t)\\R_2(t)\end{pmatrix}+\mathcal H\left(t ,\begin{pmatrix}
\eps_1(t)\\ \eps_2(t)\end{pmatrix}\right)+O\left( \norm*{\begin{pmatrix}
\eps_1(t)\\ \eps_2(t)\end{pmatrix}}^3_{H^1\times H^1} \right).
\end{multline}
By Lemma~\ref{lem:conservation}, we have
	\begin{equation}\label{eq:combine-2}
	\abs*{\mathcal S\begin{pmatrix}
	u_1(t)\\u_2(t)\end{pmatrix}-
	\mathcal S\begin{pmatrix}
	u_1(T^n)\\u_2(T^n)\end{pmatrix}}
	\leq \frac{C}{\sqrt{\omega_\star}v_\star} e^{-2\sqrt{\omega_\star}v_\star t}.
	\end{equation}
By definition of $(u_1,u_2)^\intercal$ and since $\mathcal{S}$ is made of conserved quantities for $R_1$ and $R_2$, we have:
\begin{equation}\label{eq:combine-3}
\mathcal S\begin{pmatrix}
u_1(T^n)\\u_2(T^n)\end{pmatrix}=\mathcal S\begin{pmatrix}
R_1(T^n)\\R_2(T^n)\end{pmatrix}=\mathcal S\begin{pmatrix}
R_1(t)\\R_2(t)\end{pmatrix}.
\end{equation}
From the bootstrap assumption \eqref{eq:bootstrap} we have
\begin{equation}\label{eq:combine-4}
O\left( \norm*{\begin{pmatrix}
\eps_1(t)\\ \eps_2(t)\end{pmatrix}}^3_{H^1\times H^1} \right)=Ce^{-3\sqrt{\omega_\star}v_\star t}.
\end{equation}
Combining \eqref{eq:combine-1}-\eqref{eq:combine-4}, we infer that, possibly increasing $T_0$, we have:
\begin{equation}\label{eq:H1}
\abs*{\mathcal H\left(t ,\begin{pmatrix}
\eps_1(t)\\ \eps_2(t)\end{pmatrix}\right)}\leq \frac{C}{\sqrt{\omega_\star}v_\star} e^{-2\sqrt{\omega_\star}v_\star t}.
\end{equation}
Hence to control the $\hu\times\hu-$norm it remains to control the $L^2(\R^d)$ scalar products in the RHS of \eqref{eq:coer}. This is done using Lemma~\ref{lem:L2-control} and remembering that the $\xi_j^k$ are bounded in $L^2(\R^d)$:
\begin{equation}\label{eq:L2}
\sum_{j=1,2}\sum_{k=1}^{\nu_j}\psld{\eps_j(t)}{\xi_j^k(t)}^2
\leq C\norm*{\begin{pmatrix}
\eps_1(t)\\ \eps_2(t)\end{pmatrix}}_{L^2\times L^2}^2\leq \frac{C}{\sqrt{\omega_\star}v_\star} e^{-2\sqrt{\omega_\star}v_\star t}.
\end{equation}
Combining \eqref{eq:coer}, \eqref{eq:H1} and\eqref{eq:L2} we get
\[
\norm*{\begin{pmatrix}
\eps_1(t)\\ \eps_2(t)\end{pmatrix}}^2_{H^1\times H^1}\leq \frac{C}{\sqrt{\omega_\star}v_\star} e^{-2\sqrt{\omega_\star}v_\star t}.
\]
Therefore, there exists $v_\sharp$ such that if $v_\star>v_\sharp$ then we have
\[
\norm*{\begin{pmatrix}
\eps_1(t)\\ \eps_2(t)\end{pmatrix}}_{H^1\times H^1}\leq \frac{1}{2} e^{-\sqrt{\omega_\star}v_\star t},
\]
which is the desired conclusion.
\end{proof}

The following estimate on the interaction of the  two solitary waves will be central in the proofs of Lemmas \ref{lem:L2-control} and \ref{lem:conservation}.

\begin{lem}[Solitary Waves Interaction]\label{lem:exponential-decay}
There exists $C>0$ depending on $\Phi_1,\Phi_2$, $\omega_1,\omega_2$, $\mu_1,\mu_2$, \emph{but not on } $v_1,v_2$ such that for all $x\in \R^d$ 
we have 
\begin{align*}
\norm[\Big]{|R_1(t)||R_2(t)|}_{L^2}&\leq
 Ce^{-\frac32\sqrt{\omega_\star}v_\star t},\\
\norm[\Big]{\big(|R_1(t)|+|\nabla R_1(t)|\big)\big(|R_2(t)|+|\nabla R_2(t)|\big)}_{L^2}&\leq
 C(1+|v_1|+|v_2|)^2e^{-\frac32\sqrt{\omega_\star}v_\star t}.
\end{align*}
\end{lem}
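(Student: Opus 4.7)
The plan rests on the facts that each $R_j$ is a translate of an exponentially decaying profile $\Phi_j$ centered at $v_j t + x_j$, and these two centers drift apart at linear rate $v_\star t$. Setting $y_j := x - v_j t - x_j$ and using the decay estimate stated after \eqref{eq:snls-basic}, for any $\eta \in (3/4, 1)$ there is a constant $C$ depending on $\Phi_j, \omega_j, \mu_j$ (but \emph{not} on $v_j$) such that
\[
|R_j(t,x)| \leq C e^{-\eta \sqrt{\omega_j}|y_j|}, \qquad |\nabla R_j(t,x)| \leq C (1+|v_j|) e^{-\eta\sqrt{\omega_j}|y_j|},
\]
the factor $1+|v_j|$ in the gradient bound arising when differentiating the Galilean phase $e^{\frac{i}{2} v_j \cdot x}$, while $\sqrt{\omega_j}$-factors produced by differentiating the profile itself are absorbed into $C$.

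The key step is then to split the combined exponent into a spatially-integrable piece and a temporally-decaying piece. Pick $\theta \in (3/4, 1)$ and $\eta \in (3/4,1)$ with $\theta \eta = 3/4$, and write
\[
\eta\sqrt{\omega_1}|y_1| + \eta\sqrt{\omega_2}|y_2| = (1-\theta)\bigl[\eta\sqrt{\omega_1}|y_1|+\eta\sqrt{\omega_2}|y_2|\bigr] + \theta\bigl[\eta\sqrt{\omega_1}|y_1|+\eta\sqrt{\omega_2}|y_2|\bigr].
\]
The first bracket furnishes an $x$-integrable factor whose $L^2(\R^d)$-norm is bounded uniformly in $t$ (by translation invariance of Lebesgue measure, e.g.\ $\int_{\R^d} e^{-2(1-\theta)\eta\sqrt{\omega_1}|y_1|}\,dx$ is a finite constant depending only on $\omega_1,\theta,\eta$). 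For the second, using $\sqrt{\omega_j} \geq 2\sqrt{\omega_\star}$ (recall $\omega_\star = \tfrac14 \min\{\omega_1,\omega_2\}$) and the triangle inequality
\[
|y_1| + |y_2| \geq |y_1 - y_2| = |(v_1 - v_2)t + (x_1 - x_2)| \geq v_\star t - |x_1 - x_2|,
\]
one gets $\theta[\eta\sqrt{\omega_1}|y_1| + \eta\sqrt{\omega_2}|y_2|] \geq \tfrac{3}{2}\sqrt{\omega_\star}(v_\star t - |x_1 - x_2|)$, which yields the advertised factor $e^{-\frac{3}{2}\sqrt{\omega_\star}v_\star t}$ up to the harmless constant $e^{\frac{3}{2}\sqrt{\omega_\star}|x_1-x_2|}$ absorbed into $C$.

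Combining these two factors and taking $L^2$-norms gives the first inequality. The second follows verbatim after applying the gradient bound to each factor and pulling the constant $(1+|v_1|)(1+|v_2|) \leq (1+|v_1|+|v_2|)^2$ out of the integral. There is no genuine obstacle: the only arithmetic constraint is to solve $\theta\eta = 3/4$ with $\theta, \eta \in (3/4,1)$, which is exactly what dictates the exponent $\tfrac{3}{2}$ in the statement; the rest is pointwise exponential decay plus the triangle inequality.
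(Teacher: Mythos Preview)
Your argument is correct and follows essentially the same route as the paper: bound each $|R_j|$ (and $|\nabla R_j|$) pointwise by an exponential in $|x-v_jt-x_j|$, then split the combined exponent into a spatially integrable piece and a piece that, via the triangle inequality $|y_1|+|y_2|\geq v_\star t-|x_1-x_2|$, yields the factor $e^{-\frac32\sqrt{\omega_\star}v_\star t}$. The paper implements this split with the specific choices $\eta=\tfrac78$, $\delta=\tfrac18$ (playing the role of your $(1-\theta)\eta$), which is exactly your constraint $\theta\eta=\tfrac34$ up to relabelling; your separate treatment of the gradient-free estimate, avoiding the superfluous $(1+|v_j|)$ factor there, is a minor sharpening but not a different idea.
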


\begin{proof}
Take $0<\eta<1$. 
Each $\Phi_j$ verifies 
\[
|\Phi_j(x)|+|\nabla\Phi_j(x)|\leq C e^{-\eta|x|}, 
\]
where $C=C(\Phi_j)$. Using the definition \eqref{eq:soliton-j} of a solitary wave, we have for each $R_j$ the estimate
\[
|R_j(t,x)|+|\nabla R_j(t,x)|\leq  C(1+|v_j|) e^{-\eta\sqrt{\omega_j}|x-v_jt-x_j|},
\]
where $C=C(\Phi_j,\omega_j,\mu_j)$.
Therefore, 
\begin{multline*}
(|R_1(t,x)|+|\nabla R_1(t,x)|)(|R_2(t,x)|+|\nabla R_2(t,x)|)\\
	\leq  C(1+|v_1|+|v_2|)^2e^{-\eta\sqrt{\min_{j=1,2}\{\omega_j\}}(|x-v_1t-x_1|+|x-v_2t-x_2|)},
\end{multline*}
where $C$ depends on $\Phi_1,\Phi_2,$  $\omega_1,\omega_2$ and $\mu_1,\mu_2$.
Let $0<\delta<\eta$. Since 
\[
|(v_1-v_2)t|\leq |x-v_1t|+|x-v_2t|,
\]
we infer that
\begin{multline*}
(|R_1(t,x)|+|\nabla R_1(t,x)|)(|R_2(t,x)|+|\nabla R_2(t,x)|)\\
	\leq  C(1+|v_1|+|v_2|)^2 e^{-\delta\sqrt{\min_{j=1,2}\{\omega_j\}}(|x-v_1t-x_1|+|x-v_2t-x_2|)}\\
	\cdot e^{-(\eta-\delta)\sqrt{\min_{j=1,2}\{\omega_j\}}|(v_1-v_2)t|},
\end{multline*}
where now $C$ depends also on $x_1$, $x_2$.
Choosing $\eta=\frac78$, $\delta=\frac18$ and remembering that $\omega_\star=\frac{1}{4}\min\{\omega_1,\omega_2\}$ and $v_\star=|v_1-v_2|$, we obtain
\begin{multline*}
(|R_1(t,x)|+|\nabla R_1(t,x)|)(|R_2(t,x)|+|\nabla R_2(t,x)|)\\
	\leq  C(1+|v_1|+|v_2|)^2e^{-\frac12\sqrt{\omega_\star}(|x-v_1t-x_1|+|x-v_2t-x_2|)}
e^{-\frac32\sqrt{\omega_\star}v_\star t}.
\end{multline*}
Taking the $L^2(\R^d)-$norm and using Cauchy-Schwartz inequality, we get
\begin{multline*}
\norm[\Big]{\big(|R_1(t)|+|\nabla R_1(t)|\big)\big(|R_2(t)|+|\nabla R_2(t)|\big)}_{L^2}\\
	\leq  C(1+|v_1|+|v_2|)^2e^{-\frac32\sqrt{\omega_\star}v_\star t}\norm{e^{-\frac12\sqrt{\omega_\star}|x|} }_{L^2}\\
	\leq   C(1+|v_1|+|v_2|)^2e^{-\frac32\sqrt{\omega_\star}v_\star t},
\end{multline*}
which is the desired conclusion.
\end{proof}

To prove the $L^2(\R^d)$-control Lemma \ref{lem:L2-control}, as in \cite{CoLe11} we adopt the following strategy. We first write the system satisfied by $(\eps_1,\eps_2)^\intercal$. Then, we differentiate in time the $L^2(\R^d)$-masses of $\eps_1$ and $\eps_2$, and estimate the result with  $e^{-2\sqrt{\omega_\star}v_\star t}$. Integrating in time finally allows us to gain the extra factor $\frac{1}{\sqrt{\omega_\star}v_\star}$.

\begin{proof}[Proof of Lemma~\ref{lem:L2-control}]
The couple
$(\varepsilon_1,\varepsilon_2)^\intercal$ satisfies the equation
\[
i\partial_t\begin{pmatrix}
\varepsilon_1\\ \varepsilon_2\end{pmatrix}+
\mathcal L \begin{pmatrix}
\varepsilon_1\\ \varepsilon_2\end{pmatrix}+
\mathcal N\begin{pmatrix}
\varepsilon_1\\\varepsilon_2\end{pmatrix}+
\mathcal F=0
\]
where $\mathcal L$ denote the linear part in $(\varepsilon_1, \varepsilon_2)^\intercal$, $\mathcal N$ the nonlinear part and $\mathcal F$ the source term. Precisely, we set
\[
\mathcal L \begin{pmatrix}
\varepsilon_1\\\varepsilon_2\end{pmatrix}:=
\begin{pmatrix}
L_1(\varepsilon_1,\varepsilon_2)\\L_2(\varepsilon_1,\varepsilon_2)\end{pmatrix},
\qquad
\mathcal N \begin{pmatrix}
\varepsilon_1\\\varepsilon_2\end{pmatrix}
:=\begin{pmatrix}
N_1(\varepsilon_1,\varepsilon_2)
\\
N_2(\varepsilon_1,\varepsilon_2)
\end{pmatrix},
\qquad
\mathcal F:=
\beta \begin{pmatrix}
|R_1|^2R_2\\|R_2|^2R_1\end{pmatrix},
\]
where 
\begin{multline*}
\begin{pmatrix}
L_1(\varepsilon_1,\varepsilon_2)\\L_2(\varepsilon_1,\varepsilon_2)
\end{pmatrix}=
\begin{pmatrix}\Delta \varepsilon_1+(2\mu_1|R_1|^2+\beta |R_2|^2)\varepsilon_1
+\mu_1R_1^2\bar \varepsilon_1+
\beta(R_1\bar R_2\eps_2+R_1R_2\bar\eps_2)\\
\Delta \varepsilon_2+(2\mu_2|R_2|^2+\beta |R_1|^2)\varepsilon_2
+\mu_2R_2^2\bar \varepsilon_2+
\beta(\bar R_1R_2\eps_1+R_1R_2\bar\eps_1)
\end{pmatrix},
\\
\shoveleft{
\begin{pmatrix}
N_1(\varepsilon_1,\varepsilon_2)
\\
N_2(\varepsilon_1,\varepsilon_2)
\end{pmatrix}
=
\begin{pmatrix}
\mu_1\left(\bar R_1\varepsilon_1^2+2R_1|\varepsilon_1|^2+|\varepsilon_1|^2\varepsilon_1\right)
\\
\mu_2\left(\bar R_2\varepsilon_2^2+2R_2|\varepsilon_2|^2+|\varepsilon_2|^2\varepsilon_2\right)
\end{pmatrix}}\\+\beta 
\begin{pmatrix}
R_2\bar\varepsilon_2\varepsilon_1+\bar R_2\varepsilon_2\varepsilon_1+R_1|\varepsilon_2|^2
+|\varepsilon_2|^2\varepsilon_1
\\
R_1\bar\varepsilon_1\varepsilon_2+\bar R_1\varepsilon_1\varepsilon_2+R_2|\varepsilon_1|^2
+|\varepsilon_1|^2\varepsilon_2
\end{pmatrix}.
\end{multline*}
We make the computations for $\eps_1$, the case of $\eps_2$ being exactly symmetric. 
\begin{multline}\label{eq:time}
\frac{\partial}{\partial t}M(\eps_1)=\frac{1}{2}\frac{\partial}{\partial t}\left(\norm*{\varepsilon_1(t)}_{L^2}^2\right)=
-\Im \int_{\R^d} (L_1(\varepsilon_1,\varepsilon_2)\bar{\varepsilon}_1+ N_1(\varepsilon_1,\varepsilon_2)\bar{\varepsilon}_1\\+\beta|R_1|^2R_2\bar{\varepsilon}_1)dx.
\end{multline}
Using the bootstrap assumption \eqref{eq:bootstrap}, we immediately obtain the following estimate:
\begin{align}
\left| \Im \int_{\R^d} L_1(\varepsilon_1,\varepsilon_2)\bar{\varepsilon}_1dx \right|
&=
\left|\Im\int_{\R^d} 
\mu_1R_1^2\bar \varepsilon_1^2+
\beta(R_1\bar R_2\bar\eps_1\eps_2+R_1R_2\bar\eps_1\bar\eps_2)
dx\right|,
\notag\\
&\leq C(\norm{R_1}^2_{L^\infty}+\norm{R_2}_{L^\infty}^2)(\norm{\varepsilon_1}^{2}_{H^1} +\norm{\varepsilon_2}^2_{H^1}),\notag\\
&\leq C e^{-2\sqrt{\omega_{\star}}v_{\star}t}.\label{eq:linear}
\end{align}
Here, and in the rest of the proof, the constant $C$ may depend on $\beta, \mu_1, \mu_2,$ $\Phi_1, \Phi_2,x_1,x_2$, but \emph{not} on $v_1,v_2$. This is due to the fact that $\norm{R_j}_{L^\infty}=\sqrt{\frac{\omega_j}{ \mu_j}}\norm{\Phi_j}_{L^\infty}$  for $j=1,2$. We consider now the nonlinear part. Since $d\leq 3$ we have the embedding  of $H^1(\mathbb R^d)$ into $L^3(\mathbb R^d)$ and $L^4(\mathbb R^d)$ and therefore we can prove that
\begin{align}\label{eq:nonlinear}
\abs*{ \Im \int_{\R^d} N_1(\varepsilon_1,\varepsilon_2)\bar{\varepsilon}_1dx} 
	&\leq\abs*{ \int_{\R^d} \mu_1\left(\bar R_1\varepsilon_1^2+2R_1|\varepsilon_1|^2+|\varepsilon_1|^2\varepsilon_1\right)\bar{\varepsilon}_1dx}\notag\\
	&\qquad+\abs*{ \int_{\R^d}\beta|(R_2\bar\varepsilon_2\varepsilon_1+\bar R_2\varepsilon_2\varepsilon_1+R_1|\varepsilon_2|^2
+|\varepsilon_2|^2\varepsilon_1)\bar{\varepsilon}_1dx}\notag\\
	&\leq C(\norm{R_1}_{L^\infty}+\norm{R_2}_{L^\infty})(\norm{\eps_1}^2_{H^1}\norm{\eps_2}^2_{H^1}+\norm{\eps_1}^3_{H^1}+\norm{\eps_1}^4_{H^1})  \notag\\
	&\leq C e^{-3\sqrt{\omega_{\star}}v_{\star}t}.
\end{align}
Last, in order to estimate the source term, we need to use also Lemma~\ref{lem:exponential-decay} in combinaison with the bootstrap assumption \eqref{eq:bootstrap}.
\begin{equation}\label{eq:source}
\left|\Im \int|R_2|^2R_1\bar{\varepsilon}_1\right|\leq 
\norm{R_2}_{L^\infty}\norm*{|R_1||R_2|}_{L^2}\norm{\varepsilon_1}_{H^1}\leq 
Ce^{-\frac{5}{2}\sqrt{\omega_{\star}}v_{\star}t},
\end{equation}
Combining \eqref{eq:time}-\eqref{eq:source} we get:
\[
\abs*{\frac{\partial}{\partial t}M(\eps_1)}\leq Ce^{-2\sqrt{\omega_{\star}}v_{\star}t}.
\]
Integrating in time and recalling that by definition we have $\eps_1(T^n)=0$, we obtain:
\[
M(\eps_1(t))
\leq \int^{T^n}_t \abs*{\frac{\partial}{\partial s}M(\eps_1(s))}ds
\leq \frac{C }{\sqrt{\omega_{\star}}v_{\star}}e^{-2\sqrt{\omega_{\star}}v_{\star}t},
\]
which is the desired conclusion for $\eps_1$. As already said, the calculations for $\eps_2$ are perfectly symmetric, hence the lemma is proved.
\end{proof}

Recall that $\mathcal S$ is build with scalar energies, masses and momentums. To prove Lemma \ref{lem:conservation}, the idea is, as for the proof of Lemma~\ref{lem:L2-control}, to differentiate in time the various quantities involved in $\mathcal S$ (see \eqref{eq:def-mathcal-S} and \eqref{eq:def-S}), control the result with $e^{-2\sqrt{\omega_\star}v_\star t}$  and then  integrate  to gain the extra factor $\frac{1}{\sqrt{\omega_\star}v_\star}$.

\begin{proof}[Proof of Lemma~\ref{lem:conservation}]
Since the scalar masses are conserved by the flow of \eqref{eq:nls} and $(u_1,u_2)^\intercal$ is a solution of \eqref{eq:nls}, it follows immediatly that 
\begin{equation}\label{eq:mass-control}
\abs*{M(u_1(t))-M(u_1(T^n))}
+\abs*{M(u_2(t))-M(u_2(T^n))}
=0.
\end{equation}

For the momentum part, we need to estimate
\[
\abs*{v_1\cdot \left(P(u_1(t))-P(u_1(T^n))\right)+v_2\cdot \left(P(u_2(t))-P(u_2(T^n))\right)}.
\]
In fact, since the total momentum \eqref{eq:conservP} is a conserved quantity, we have to estimate
\begin{equation}\label{eq:to-estimate}
\abs*{(v_1-v_2)\cdot \left(P(u_1(t))-P(u_1(T^n))\right)}=v_\star\abs*{P(u_1(t))-P(u_1(T^n))}.
\end{equation}
Hence we differentiating at time $t$ the scalar momentum $P_1$. Using the system \eqref{eq:nls} satisfied by  $(u_1,u_2)^\intercal$ and integrations by parts, we obtain
\[
\frac{\partial}{\partial t}P(u_1)=-\Im\int_{\R^d}\partial_t u_1\nabla \bar u_1dx=-\frac12\int_{\R^d}|u_2|^2\nabla|u_1|^2dx.
\]
We  recall that $u_1=R_1+\eps_1$ and $u_2=R_2+\eps_2$ and replace in the previous equation to get
\begin{multline}\label{eq:begin}
\frac{\partial}{\partial t}P(u_1)=-\frac12\int_{\R^d}
|R_2|^2\nabla|R_1|^2+
2|R_2|^2\nabla(\Re(\bar R_1\eps_1))+
|R_2|^2\nabla|\eps_1|^2\\+
2\Re(\bar R_2\eps_2)\nabla|R_1|^2+
4\Re(\bar R_2\eps_2)\nabla(\Re(\bar R_1\eps_1))+
2\Re(\bar R_2\eps_2)\nabla|\eps_1|^2\\+
|\eps_2|^2\nabla|R_1|^2+
2|\eps_2|^2\nabla(\Re(\bar R_1\eps_1)+
|\eps_2|^2\nabla|\eps_1|^2
dx.
\end{multline}
We treat the various products appearing differently depending on their order in $R_j$ and $\eps_j$. When there is a product of $R_1$ and $R_2$ or of their derivatives, we use Lemma~\ref{lem:exponential-decay}, as for the following term.
\begin{multline}
\abs*{\int_{\R^d}|R_2|^2\nabla|R_1|^2dx}
	\leq C\norm{(|R_1|+|\nabla R_1|)|R_2|}^2_{L^2}\\
	\leq C(1+|v_1|+|v_2|)^4e^{-3\sqrt{\omega_\star}v_\star t}.
\end{multline}
To deal with the $\eps_j$, we use the bootstrap assumption \eqref{eq:bootstrap}. With the help of Cauchy-Schwartz and H\"older inequalities and Sobolev embeddings, we get
\begin{equation}
\abs*{\int_{\R^d}|\eps_2|^2\nabla|\eps_1|^2dx}
	\leq C\norm{\nabla\eps_1}_{L^2}\norm{\eps_1}_{L^6}\norm{\eps_2}_{L^6}^2\leq Ce^{-4\sqrt{\omega_\star}v_\star t}.
\end{equation}
We possibly combine the two arguments as follows.
\begin{multline}
\abs*{\int_{\R^d}\Re(\bar R_2\eps_2)\nabla(\Re(\bar R_1\eps_1))dx}\\
	\leq \norm{|R_2|(|R_1|+|\nabla R_1|)}_{L^2}\norm{|\eps_2|(|\eps_1|+|\nabla\eps_1|)}_{L^2}\\
	\leq C(1+|v_1|+|v_2|)^2e^{-\frac72\sqrt{\omega_\star}v_\star t}.
\end{multline}
When there is an extra $R_j$ that we cannot use with Lemma~\ref{lem:exponential-decay}, we just take its $L^\infty(\R^d)$-norm:
\begin{multline}
\abs*{\int_{\R^d}|R_2|^2\nabla(\Re(\bar R_1\eps_1))dx}
+\abs*{\int_{\R^d}\Re(\bar R_2\eps_2)\nabla|R_1|^2dx}\\
	\leq C(\norm{R_1}_{L^\infty}+\norm{R_2}_{L^\infty})\norm{(|R_1|+|\nabla R_1|)|R_2|}_{L^2}(\norm{|\eps_1|+|\nabla\eps_1|}_{L^2}+\norm{\eps_2}_{L^2})\\
	\leq  C(1+|v_1|+|v_2|)^2e^{-\frac52\sqrt{\omega_\star}v_\star t}.
\end{multline}
The following estimate is obtained with similar arguments:
\begin{equation}\label{eq:beforeIBP}
\abs*{\int_{\R^d}\Re(\bar R_2\eps_2)\nabla|\eps_1|^2dx} 
	\leq \norm{R_2}_{L^\infty}\norm{\eps_1}_{L^4}\norm{\nabla\eps_1}_{L^2}\norm{\eps_2}_{L^4}\leq Ce^{-3\sqrt{\omega_\star}v_\star t}.
\end{equation}
After an integration by parts, the next product can be treated as in \eqref{eq:beforeIBP} 
\begin{equation}
\abs*{\int_{\R^d}|\eps_2|^2\nabla(\Re(\bar R_1\eps_1))dx}=
	\abs*{\int_{\R^d}\nabla|\eps_2|^2(\Re(\bar R_1\eps_1)dx}
	\leq Ce^{-3\sqrt{\omega_\star}v_\star t}.
\end{equation}
Before estimating the remaining two terms, we make a remark about $\norm{\nabla|R_j|^2}_{L^\infty}$. From the definition of a solitary wave \eqref{eq:soliton-j}, we have
\begin{multline*}
\nabla(|R_j|^2)=\frac{\omega_j}{\mu_j}\nabla{|\Phi_j\big(\sqrt{\omega_j} (x-v_jt-x_j)\big)|^2}=\\
\frac{2\omega_j^\frac32}{\mu_j} \Re\Big(\bar\Phi_j\big(\sqrt{\omega_j} (x-v_jt-x_j)\big)\nabla\Phi_j\big(\sqrt{\omega_j} (x-v_jt-x_j)\big)\Big).
\end{multline*}
This implies that 
\[
\norm{\nabla|R_j|^2}_{L^\infty}\leq \frac{2\omega_j^{\frac32}}{\mu_j}\norm{\Phi_j}_{L^\infty}\norm{\nabla \Phi_j}_{L^\infty},
\]
and in particular $\norm{\nabla|R_j|^2}_{L^\infty}$ \emph{does not depend on $v_j$}.
 We can now write
\[
\abs*{\int_{\R^d}|\eps_2|^2\nabla|R_1|^2dx}
	\leq \norm{\nabla|R_1|^2}_{L^\infty}\norm{\eps_2}_{L^2}^2\leq C\norm{\eps_2}_{L^2}^2,\\
\]
Here, if we use directly the bootstrap assumption \eqref{eq:bootstrap}, we will miss the correct estimate by a factor $\frac{1}{v_\star}$ because of the $v_\star$ appearing in \eqref{eq:to-estimate}. However, remembering that we already improved \eqref{eq:bootstrap} at the $L^2(\R^d)$-level in Lemma~\ref{lem:L2-control}, we can conclude that:
\begin{equation}
\abs*{\int_{\R^d}|\eps_2|^2\nabla|R_1|^2dx}
	\leq \frac{C}{\sqrt{\omega_\star}v_\star}e^{-2\sqrt{\omega_\star}v_\star}.
\end{equation}
The last term is treated in a similar fashion after an integration by parts.
\begin{equation}\label{eq:last}
\abs*{\int_{\R^d}|R_2|^2\nabla|\eps_1|^2dx}
	=\abs*{\int_{\R^d}\nabla|R_2|^2|\eps_1|^2dx}
	\leq \frac{C}{\sqrt{\omega_\star}v_\star}e^{-2\sqrt{\omega_\star}v_\star}.
\end{equation}
Take now $T_0$ large enough so that $v_\star(1+|v_1|+|v_2|)^4e^{-\frac{1}{2}\sqrt{\omega_\star}v_\star T_0}<1$. With this assumption and the fact that $t_0>T_0$, we can now combine \eqref{eq:begin}-\eqref{eq:last}, and argue in the same fashion for the scalar momentum of $u_2$, to finally find:
\begin{equation}\label{eq:momentum}
\abs*{v_\star\frac{\partial}{\partial t}P(u_1)}\leq C e^{-2\sqrt{\omega_\star}v_\star t}
\end{equation}
for $C$ depending on $\Phi_1,\Phi_2$, $\omega_1,\omega_2$, $\mu_1,\mu_2$ but not on $v_1,v_2$.
Therefore, we obtain the following control on scalar momentums
\begin{multline}\label{eq:momentum-control}
\abs*{v_1\cdot \left(P(u_1(t))-P(u_1(T^n))\right)+v_2\cdot \left(P(u_2(t))-P(u_2(T^n))\right)}\\
	=v_\star\abs*{P(u_1(t))-P(u_1(T^n))}
	\leq  \int_t^{T^n}\abs*{v_\star\frac{\partial}{\partial s}P(u_1(s))}ds\\
	\leq \frac{C}{\sqrt{\omega_\star}v_\star} e^{-2\sqrt{\omega_\star}v_\star t}.
\end{multline}

Now, we treat the energy part. The direct approach consisting in trying to differentiate in time the energies $E(u_j,\mu_j)$ and then argue as for the momentums is bound to fail because of the appearance of terms like
\[
\int_{\R^d}\Im(\eps_1\nabla\bar\eps_1)\Re(\eps_2\nabla\bar \eps_2)dx,
\]
which, unless $d=1$, we cannot treat with an $H^1(\R^d)$-information like \eqref{eq:bootstrap}.  However, if we use the conservation of the total energy $\mathcal E$ we remark that:
\begin{multline}\label{eq:energy-trick}
E(u_1(t),\mu_1)-E(u_1(T^n),\mu_1)+E(u_2(t),\mu_2)-E(u_2(T^n),\mu_2)\\=\mathcal E \begin{pmatrix}u_1(t)\\u_2(t)\end{pmatrix}-\mathcal E \begin{pmatrix}u_1(T^n)\\u_2(T^n)\end{pmatrix}-\beta\int_{\R^d}(|u_1(t)|^2|u_2(t)|^2-|u_1(T^n)|^2|u_2(T^n)|^2)dx\\
=-\beta\int_{\R^d}(|u_1(t)|^2|u_2(t)|^2-|u_1(T^n)|^2|u_2(T^n)|^2)dx.
\end{multline}
Therefore, it is enough to prove that 
\begin{equation}\label{eq:coupling}
\int_{\R^d}\big(|u_1(t)|^2|u_2(t)|^2+|u_1(T^n)|^2|u_2(T^n)|^2\big)dx\leq \frac{C}{\sqrt{\omega_\star}v_\star}e^{-2\sqrt{\omega_\star}v_\star t}.
\end{equation}
To obtain \eqref{eq:coupling}, we do not differentiate  in time the LHS but instead we try to obtain the estimate directly. 
First note that by definition of $(u_1,u_2)^\intercal$ and Lemma \ref{lem:exponential-decay} we have 
\begin{multline*}
\int_{\R^d}|u_1(T^n)|^2|u_2(T^n)|^2dx=\int_{\R^d}|R_1(T^n)|^2|R_2(T^n)|^2dx\\
	\leq Ce^{-3\sqrt{\omega_\star}v_\star T^n}\leq  Ce^{-3\sqrt{\omega_\star}v_\star t}.
\end{multline*}
As before, for the other part, we replace $u_j$ by $R_j+\eps_j$ and develop. 
\begin{multline}\label{eq:begin-energy}
\int_{\R^d}|u_1|^2|u_2|^2dx=\int_{\R^d}(
|R_1|^2|R_2|^2+
2|R_1|^2\Re(\bar R_2\eps_2)+
|R_1|^2|\eps_2|^2\\+
2\Re(\bar R_1\eps_1)|R_2|^2+
4\Re(\bar R_1\eps_1)\Re(\bar R_2\eps_2)+
2\Re(\bar R_1\eps_1)|\eps_2|^2\\+
|\eps_1|^2|R_2|^2+
|\eps_1|^2\Re(\bar R_2\eps_2)+
|\eps_1|^2|\eps_2|^2
)dx.
\end{multline}
The following estimates are obtained using the same arguments as in the momentum case, in particular  Lemma~\ref{lem:exponential-decay} and the bootstrap assumption \eqref{eq:bootstrap}.
\begin{align}
\abs*{\int_{\R^d}|R_1|^2|R_2|^2dx}
	&\leq Ce^{-3\sqrt{\omega_\star}v_\star t},\\
\abs*{\int_{\R^d}2|R_1|^2\Re(\bar R_2\eps_2)dx}
	&\leq C\norm{R_1}_{L^\infty}\norm{|R_1||R_2|}_{L^2}\norm{\eps_2}_{L^2}\leq Ce^{-\frac52\sqrt{\omega_\star}v_\star t},\\
\abs*{\int_{\R^d}\Re(\bar R_1\eps_1)|R_2|^2dx}
	&\leq C\norm{R_2}_{L^\infty}\norm{|R_1||R_2|}_{L^2}\norm{\eps_1}_{L^2}\leq Ce^{-\frac52\sqrt{\omega_\star}v_\star t},\\
\abs*{\int_{\R^d}\Re(\bar R_1\eps_1)\Re(\bar R_2\eps_2)dx}
	&\leq C\norm{|R_1||R_2|}_{L^2}\norm{|\eps_1||\eps_2|}_{L^2}\leq Ce^{-\frac72\sqrt{\omega_\star}v_\star t},\\
\abs*{\int_{\R^d}\Re(\bar R_1\eps_1)|\eps_2|^2dx}
	&\leq C\norm{R_1}_{L^\infty}\norm{\eps_1}_{L^2}\norm{\eps_2}^2_{L^4}\leq Ce^{-3\sqrt{\omega_\star}v_\star t},\\
\abs*{\int_{\R^d}|\eps_1|^2\Re(\bar R_2\eps_2)dx}
	&\leq  \norm{R_2}_{L^\infty}\norm{\eps_1}_{L^4}^2\norm{\eps_2}_{L^2}\leq  Ce^{-3\sqrt{\omega_\star}v_\star t},\\
\abs*{\int_{\R^d}|\eps_1|^2|\eps_2|^2dx}
	&\leq  \norm{\eps_1}_{L^4}^2\norm{\eps_2}^2_{L^4}\leq  Ce^{-4\sqrt{\omega_\star}v_\star t}.
\end{align}
We need an extra argument for the two remaining terms. Indeed, we have
\begin{align*}
\abs*{\int_{\R^d}|\eps_1|^2|R_2|^2dx}
	&\leq  \norm{R_2}_{L^\infty}^2\norm{\eps_1}_{L^2}^2\leq C \norm{\eps_1}_{L^2}^2,\\
\abs*{\int_{\R^d}|R_1|^2|\eps_2|^2dx}
	&\leq \norm{R_1}_{L^\infty}^2\norm{\eps_2}_{L^2}^2\leq C \norm{\eps_2}_{L^2}^2.
\end{align*}
As for the momemtum part, if we use  \eqref{eq:bootstrap} here, we miss the correct estimate by a factor $\frac{1}{\sqrt{\omega_\star}v_\star}$. However, using Lemma~\ref{lem:L2-control}, we can conclude that:
\begin{align}
\abs*{\int_{\R^d}|\eps_1|^2|R_2|^2dx}
	&\leq\frac{C}{\sqrt{\omega_\star}v_\star}e^{-2\sqrt{\omega_\star}v_\star t},\\
\abs*{\int_{\R^d}|R_1|^2|\eps_2|^2dx}
	&\leq \frac{C}{\sqrt{\omega_\star}v_\star}e^{-2\sqrt{\omega_\star}v_\star t}.\label{eq:last-energy}
\end{align}
Putting together \eqref{eq:begin-energy}-\eqref{eq:last-energy} and assuming $T_0$ large enough implies the desired estimate \eqref{eq:coupling}.

To conclude the proof, we combine \eqref{eq:mass-control}, \eqref{eq:momentum-control}, \eqref{eq:energy-trick}, \eqref{eq:coupling} to obtain \eqref{eq:conservation}.
\end{proof}

\section{Compactness of the sequence of initial data}\label{sec:compactness}

In this section, we prove Proposition \ref{prop:compactness}. The proof is similar to the one given in \cite{CoLe11,MaMe06} and we repeat it here for the sake of completness. We again use the superscript $n$ to indicate the dependency in $n$.

From Proposition \ref{prop:uniform}, we know that $(u^n_1(T_0),u^n_2(T_0))^\intercal$ is bounded  in $\hu\times\hu$. Hence there exist $(u^0_1,u^0_2)^\intercal\in \hu\times\hu$ such that 
\begin{equation}\label{eq:weakconv}
\begin{pmatrix}u^n_1(T_0)\\u^n_2(T_0)\end{pmatrix}\xrightharpoonup{H^1}\begin{pmatrix}u^0_1\\u^0_2\end{pmatrix}.
\end{equation}
We now prove that convergence in \eqref{eq:weakconv} holds also strongly in $L^2(\R^d)\times L^2(\R^d)$, the result of 
Proposition \ref{prop:compactness} then readily following by interpolation. Take $\delta>0$, let $n$ be large enough and let $T_\delta\in[T_0,T^n]$ be such that $e^{-\sqrt{\omega_\star}v_\star t}<\sqrt{\frac{\delta}{4}}$. Then, by Proposition \ref{prop:uniform},
\begin{equation}\label{eq:delta-est}
\norm*{\begin{pmatrix}u^n_1(T_\delta)\\u^n_2(T_\delta)\end{pmatrix}-\begin{pmatrix}R_1(T_\delta)\\R_2(T_\delta)\end{pmatrix}}_{H^1\times H^1}\leq \sqrt{\frac{\delta}{4}}.
\end{equation}
Take $\rho_\delta>0$ such that 
\begin{equation}\label{eq:rho}
\int_{|x|>\rho_\delta}|R_1(T_\delta)|^2+|R_2(T_\delta)|^2dx\leq \frac{\delta}{4}.
\end{equation}
Then we infer from \eqref{eq:delta-est} that 
\[
\int_{|x|>\rho_\delta}|u^n_1(T_\delta)|^2+|u^n_2(T_\delta)|^2dx\leq \frac{\delta}{2}.
\]
Our goal is transfer this smallness up to $T_0$. Let $\tau:\R\to\R$ be a $\mathcal C^1$ cut-off function  such that 
\[
\tau(s)=0\text{ for }s<0,\quad\tau(s)=1\text{ for }s>1,\quad \tau(s)\in[0,1]\text{ for }s\in\R, \quad\norm{\tau'}_{L^\infty}\leq 2.
\]
Take $\kappa_\delta>0$ and define
\[
V(t):=\frac{1}{2}\int_{\R^d}\left(|u^n_1|^2+|u^n_2|^2\right)\tau\left(\frac{|x|-\rho_\delta}{\kappa_\delta}\right)dx.
\]
Then we have
\[
V'(t)=\Re\int_{\R^d}\left(\bar u^n_1\partial_tu^n_1+\bar u^n_2\partial_t u^n_2\right)\tau\left(\frac{|x|-\rho_\delta}{\kappa_\delta}\right)dx.
\]
Using the equation satisfied by $u_1$ and after an integration by part, we obtain:
\begin{align}
\Re\int_{\R^d}\bar u^n_1\partial_tu^n_1\tau\left(\frac{|x|-\rho_\delta}{\kappa_\delta}\right)dx&=
\Im\int_{\R^d}\bar u^n_1\Delta u^n_1\tau\left(\frac{|x|-\rho_\delta}{\kappa_\delta}\right)dx,\nonumber\\
&=\frac{1}{\kappa_\delta}\Im\int_{\R^d}\bar u^n_1\frac{x}{|x|}\cdot\nabla u^n_1\tau'\left(\frac{|x|-\rho_\delta}{\kappa_\delta}\right)dx.\label{eq:kappa}
\end{align}
From Proposition \ref{prop:uniform} we know that there exists $n_0$ such that
\[
\sup_{n>n_0}\sup_{t\in[T_0,T^n]}\norm*{\begin{pmatrix}u^n_1(t)\\u^n_2(t)\end{pmatrix}}_{H^1\times H^1}\leq 1.
\]
Therefore, we infer from \eqref{eq:kappa} and similar computations for $u_2$ that 
\[
|V'(t)|\leq \frac{1}{\kappa_\delta}.
\]
Choose now $\kappa_\delta$ such that $\frac{T_\delta-T_0}{\kappa_\delta}<\frac{\delta}{2}$. Then 
\begin{equation}\label{eq:V}
V(T_0)-V(T_\delta)=\int_{T_\delta}^{T_0}V'(t)dt\leq \frac{T_\delta-T_0}{\kappa_\delta}\leq \frac{\delta}{2}.
\end{equation}
Set $r_\delta:=\kappa_\delta+\rho_\delta$ (note that $r_\delta$ is independant of $n$). Since from \eqref{eq:rho} and the definition of $\tau$ we have $V(T_\delta)< \frac{\delta}{2}$, we deduce from \eqref{eq:V} that
\[
\int_{|x|>r_\delta}|u^n_1(T_0)|^2+|u^n_2(T_0)|^2dx\leq V(T_0)\leq \delta.
\]
Therefore the sequence $(u^n_1(T_0),u^n_2(T_0))^\intercal$ is $L^2(\R^d)\times L^2(\R^d)$ compact, which concludes the proof.

 \bibliographystyle{abbrv}
 \bibliography{biblio}


\end{document}